\documentclass[11pt]{article}
\usepackage{amsmath,amssymb,amsthm}
\usepackage[a4paper,margin=1in]{geometry}
\usepackage{booktabs}
\usepackage{enumitem}
\usepackage{float}
\usepackage{tikz}
\usetikzlibrary{angles,quotes}
\usepackage[hidelinks]{hyperref}

\newtheorem{theorem}{Theorem}[section]
\newtheorem{proposition}[theorem]{Proposition}
\newtheorem{lemma}[theorem]{Lemma}
\newtheorem{corollary}[theorem]{Corollary}
\newtheorem*{theoremA}{Theorem A}
\newtheorem*{theoremB}{Theorem B}
\theoremstyle{definition}
\newtheorem{definition}[theorem]{Definition}
\newtheorem{example}[theorem]{Example}
\newtheorem{question}[theorem]{Question}
\theoremstyle{remark}
\newtheorem{remark}[theorem]{Remark}

\numberwithin{equation}{section}

\newcommand{\R}{\mathbb{R}}
\newcommand{\Sph}{\mathbb{S}^2}
\newcommand{\ip}[2]{\langle #1,#2\rangle}
\newcommand{\cQ}{\mathcal{Q}}
\newcommand{\eps}{\varepsilon}
\newcommand{\sgn}{\operatorname{sign}}
\newcommand{\diam}{\operatorname{diam}}
\newcommand{\defeq}{\mathrel{\mathop:}=}
\newcommand{\eqdef}{=\mathrel{\mathop:}}

\title{Napoleonic triangles on the sphere from similar chord triangles}
\author{Quang Hung Tran}
\date{}

\begin{document}
\maketitle

\begin{abstract}
We study a variant of Napoleon's construction on the unit sphere, in which the triangles erected on the sides of a spherical triangle $ABC$ are required to have similar isosceles chord triangles with apex angle $\beta\in(0,\pi/2)$. For a non-equilateral $ABC$, we prove that the triangle $XYZ$ of spherical circumcentres is equilateral if and only if $\cos(\theta_a+\theta_b+\theta_c\pm\Omega/2)=0$, where $\theta_a,\theta_b,\theta_c$ are the base angles of the isosceles spherical triangles $XBC$, $YCA$, $ZAB$, $\Omega$ is the area of $ABC$, and the sign $+$ (resp.~$-$) corresponds to the outward (resp.~inward) construction. We derive from this an explicit classification in terms of two quadrics and a cubic inequality. It recovers the theorem of Dipierro, Noakes and Valdinoci for $\beta=\pi/3$, and exhibits three families of solutions for $\beta>\pi/3$, together with a fourth one when $\cos\beta<1/4$.
\end{abstract}

\noindent\textit{Keywords:} Napoleon's theorem, spherical triangle, spherical excess, chord triangle.\\
\textit{2020 Mathematics Subject Classification:} 51M09, 51M04.

\section{Introduction}\label{sec:intro}

\subsection{Main results}
Let $ABC$ be a triangle in the Euclidean plane. Napoleon's theorem asserts that the centres of the equilateral triangles erected on the sides of $ABC$, all outwards or all inwards, form an equilateral triangle (see e.g.~\cite[\S3.3]{CG67}); in the inward construction this triangle degenerates to a point when $ABC$ is equilateral. On the sphere the corresponding statement fails in general, as observed numerically by Treuden \cite{Tre95}. Dipierro, Noakes and Valdinoci \cite{DNV24} classified the spherical triangles for which it holds. Besides the equilateral triangles, there is a two-parameter family of outward solutions, parametrized by an ellipsoid, whose triangles of centres are all congruent \cite[Theorem~1.7]{DNV24}; the inward centres have equal pairwise distances only for an equilateral input, in which case they coincide. In the hyperbolic plane, only equilateral triangles have the Napoleon property \cite{DNV25}; we also refer to McKay \cite{McK01} for an earlier hyperbolic analogue of a different nature.

In this paper we allow the erected triangles to vary in shape. In the plane, the centre of an equilateral triangle on $BC$ sees $BC$ under the angle $2\pi/3$, whereas its opposite vertex sees $BC$ under the angle $\pi/3$. Erecting similar isosceles triangles with an arbitrary apex angle leads to Kiepert's configuration (cf.~\cite{EF94}), and a short computation with complex numbers shows that, for a non-equilateral input, their apices form an equilateral triangle only when the apex angle is $2\pi/3$, i.e.~in Napoleon's case. On a sphere of fixed radius similar triangles are congruent, so that this generalization is not available as such. We therefore impose the similarity condition on the \emph{chord triangles}, i.e.~on the Euclidean triangles in $\R^3$ spanned by points of the sphere.

More precisely, we fix once and for all an angle $\beta\in(0,\pi/2)$. On the side $BC$ of a spherical triangle $ABC$ we choose a point $D$ of the sphere such that the chord triangle $DBC$ is isosceles with Euclidean angle $\beta$ at $D$, and we let $X$ be the spherical circumcentre of $D,B,C$. The sides $CA$ and $AB$ give rise to $E,Y$ and $F,Z$ in the same way, the points $D,E,F$ being chosen all outwards or all inwards. This is possible as soon as the sides of $ABC$ are shorter than $2\beta$ (see \S\ref{sec:construction}). When $\beta=\pi/3$ the chord triangles are equilateral, and we recover the construction of \cite{DNV24}.

It turns out (Lemma~\ref{lem:construction}) that $XBC$ is an isosceles spherical triangle with apex angle $2\beta$ and acute base angles. We stress that $\beta$ is the chord angle at $D$, whereas $2\beta$ is the spherical angle at the circumcentre $X$. We write $\theta_a$ for the base angle of $XBC$, define $\theta_b,\theta_c$ cyclically, and set $\Theta\defeq\theta_a+\theta_b+\theta_c$. We denote by $\Omega$ the area of $ABC$. Our first main result is the following criterion.

\begin{theoremA}[Theorem~\ref{thm:angle}]
Let $ABC$ be a non-equilateral spherical triangle with sides shorter than $2\beta$. The triangle $XYZ$ of the outward construction is equilateral if and only if $\cos(\Theta+\Omega/2)=0$; for the inward construction the condition is $\cos(\Theta-\Omega/2)=0$. In this case $XYZ$ is nondegenerate, and its side $\ell$ satisfies $\cos\ell=\sigma\cos\beta/(1-\sigma\cos\beta)$, where $\sigma=\pm1$ is the sine of $\Theta\pm\Omega/2$.
\end{theoremA}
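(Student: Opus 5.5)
We will compute the three sides of $XYZ$ with the spherical law of cosines, applied to the triangles $XCY$, $YAZ$, $ZBX$, and then compare them. By Lemma~\ref{lem:construction}, $XBC$ is isosceles with apex angle $2\beta$ and base angle $\theta_a$. So the legs $r_a\defeq|XB|=|XC|$ are determined by $\theta_a$ through the isosceles relations $\cos r_a=\cot\theta_a\cot\beta$ (from the right triangle obtained by dropping the altitude from $X$) and $\cos\beta=\cos(a/2)\sin\theta_a$, and similarly for $r_b,r_c$. At the vertex $C$, the angle $\angle XCY$ equals $\gamma+\theta_a+\theta_b$ for the outward construction and $\gamma-\theta_a-\theta_b$ (up to orientation) for the inward one; here $\gamma$ is the angle of $ABC$ at $C$. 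Hence
\[
\cos|XY|=\cos r_a\cos r_b+\sin r_a\sin r_b\cos(\gamma\pm(\theta_a+\theta_b)),
\]
and cyclically. The key step is to rewrite this in a form where $\Theta\pm\Omega/2$ appears. Using $\gamma=\Omega+\pi-\alpha-\beta_A$, where $\beta_A$ denotes the angle at $B$ (not to be confused with $\beta$), is not symmetric enough. Instead I would use the half-angle quantities $\phi_C\defeq\gamma\pm(\theta_a+\theta_b)$ and note that $\phi_A+\phi_B+\phi_C=\Omega+\pi\pm2\Theta$. So $(\phi_A+\phi_B+\phi_C-\pi)/2=\Theta\pm\Omega/2$ up to the sign convention, and this is exactly the quantity in the statement.

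Next I would move to vectors in $\R^3$, which I expect to be cleaner. The point $X$ is the normalized vector $(B+C)/|B+C|$ rotated about the axis $B\times C$, or equivalently $X=\lambda_a(B+C)\pm\mu_a\,B\times C$ with explicit $\lambda_a,\mu_a$ depending on $\theta_a$. The chord condition says $|B-C|=|D-B|\cdot 2\sin(\beta/2)\cdots$. A cleaner normalization is that all three circumradii are tied to $\beta$, and the spherical circumradius $r$ of $DBC$ satisfies $\sin r=|B-C|/(2\sin\beta)$. Then $\ip{X}{Y}$ is a bilinear expression in $A,B,C$, in which the only nonrational ingredient is the triple product $\det(A,B,C)$. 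I would express everything through $\cos a,\cos b,\cos c$ and $\det(A,B,C)$, and use the classical identity $\tan(\Omega/2)=\det(A,B,C)/(1+\cos a+\cos b+\cos c)$.

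The goal is to show that each $\ip{X}{Y}$ can be written as
\[
\ip{X}{Y}=K+L\,g(\text{side data}),
\]
where $K$ is common to all three pairs and $L$ is proportional to $\cos(\Theta\pm\Omega/2)$. Then the equalities $\ip{X}{Y}=\ip{Y}{Z}=\ip{Z}{X}$ hold exactly when $L=0$, unless the $g$-values coincide, which should force $ABC$ to be equilateral. Once $\cos(\Theta\pm\Omega/2)=0$, the common value $K$ should reduce to $\sigma\cos\beta/(1-\sigma\cos\beta)$, where $\sigma=\sin(\Theta\pm\Omega/2)=\pm1$. Nondegeneracy follows because this value is less than $1$ for $\beta\in(0,\pi/2)$. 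As a sanity check, at $\beta=\pi/3$ the formula gives $\cos\ell=1$ or $-1/3$, consistent with \cite{DNV24}.

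The main obstacle is the middle step: finding the decomposition that isolates the factor $\cos(\Theta\pm\Omega/2)$, and proving that the residual equal-distance conditions with $L\neq0$ force $a=b=c$. For the first part, I would try writing $e^{i\theta_a}$ in terms of the half-side $a/2$ and $\beta$, and take the product $e^{i\Theta}$ together with $e^{i\Omega/2}=(1+\cos a+\cos b+\cos c+i\det)/|\cdot|$. The cosine of the total angle then becomes a real part that should appear as the coefficient $L$. For the second part, I would show that the difference $\ip{X}{Y}-\ip{Y}{Z}$ factors as $L$ times a quantity that is monotone in $\cos a-\cos c$, then argue cyclically.
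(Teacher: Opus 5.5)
Your proposal stops exactly where the proof has to happen. The identity that isolates $\cos(\Theta\pm\Omega/2)$ is the step you call ``the main obstacle'', and you do not supply it.

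The shape you aim for is in fact correct. Composing the rotations about $X,Y,Z$, as in the paper's appendix, gives
\[
\sin^2\beta\,\ip{X}{Y}=\cos^2\beta+\cos\beta\,\bigl(\sin\Phi+z\cos\Phi\bigr),\qquad z=\cot\theta_c,\ \Phi=\Theta+\eps\Omega/2 .
\]
So the function you call $g$ is just $\cot\theta_c$. The ``forces $a=b=c$'' step is then immediate, since $\cot\theta_a$ is strictly decreasing in $a$. But your remark about multiplying the $e^{i\theta}$'s by $e^{i\Omega/2}$ only names ingredients; it does not say how that product enters $\ip{X}{Y}$.

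The missing computation is as follows. From $X=(x(B+C)-\eps\,B\times C)/(\tan\beta\,(1+p))$, the numerator of $\ip{X}{Y}$ is $N_{xy}=\alpha xy-\eps\delta(x+y)+pq-r$, with $\alpha=1+p+q+r$ and $\delta=[A,B,C]$. The paper then factors $(1+z^2)N_{xy}-(1+y^2)N_{zx}=(y-z)J$. It recognizes $J=\alpha(e_1-e_3)-\eps\delta(1-e_2)$ as $\operatorname{Im}\bigl[(\alpha-i\eps\delta)(1+ix)(1+iy)(1+iz)\bigr]$, a negative multiple of $\cos\Phi$. This uses $1+ix=i\,e^{-i\theta_a}/\sin\theta_a$ and the fact that $\alpha+i\delta$ has argument $\Omega/2$.

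Your $K+L\,g$ form can also be reached directly from this computation. Use $pq-r=4\cos^4\beta(1+x^2)(1+y^2)-\alpha$ and $\alpha(1-xy)+\eps\delta(x+y)=\operatorname{Re}\bigl[(\alpha-i\eps\delta)(1+ix)(1+iy)\bigr]$. Without some such identity you have neither the criterion nor the value of $K$ when $\cos\Phi=0$.

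Your nondegeneracy argument is also incorrect. For $\sigma=1$ the value $\cos\beta/(1-\cos\beta)$ is $\ge1$ whenever $\beta\le\pi/3$, so it is not below $1$ on all of $(0,\pi/2)$. Your own sanity check returns $\cos\ell=1$ at $\beta=\pi/3$, i.e.\ $X=Y=Z$. That is precisely the degenerate situation the theorem excludes, not a case consistent with the Dipierro--Noakes--Valdinoci theorem; there the inward centres coincide only for an equilateral input.

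The correct reasoning runs the other way:
\begin{itemize}
\item The common value $\rho$ is an inner product of unit vectors, so $\rho\le1$, which forces $\cos\beta\le1/2$ when $\sigma=1$.
\item If $\rho=1$, then $X=Y=Z$, whence $x\cot\beta=\ip{X}{C}=\ip{Y}{C}=y\cot\beta$. Cyclically $x=y=z$, contradicting that $ABC$ is not equilateral.
\item Hence $\sigma=1$ entails $\beta>\pi/3$ and $0<\rho<1$.
\end{itemize}
Finally, $\rho<1$ only makes the three points distinct. For a nondegenerate triangle you need the Gram matrix of $X,Y,Z$, with eigenvalues $1-\rho,1-\rho,1+2\rho$, to be positive definite. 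So you must also check $\rho>-1/2$, which holds for $\sigma=-1$ because then $\rho=-\cos\beta/(1+\cos\beta)$.
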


In particular, for a fixed $\beta$, the non-equilateral inputs satisfying the criterion produce equilateral triangles $XYZ$ of at most two sizes. We invite the reader to compare Theorem~A with the planar situation: there $\Omega=0$ and every base angle equals $\pi/2-\beta$, so that the criterion reduces to $\cos(3\pi/2-3\beta)=0$, i.e.~to $\beta=\pi/3$. On the sphere, both $\Theta$ and the area term depend on the input triangle. In Remark~\ref{rem:limit} we show that, for small triangles and $\beta=\pi/3$, the leading discrepancy in the inward construction is precisely the defect in Weitzenb\"ock's inequality.

The criterion of Theorem~A has an algebraic counterpart. We set $x=\cot\theta_a$, $y=\cot\theta_b$, $z=\cot\theta_c$, and for $\kappa\in\{\cos\beta,-\cos\beta\}$ we let $\cQ_\kappa$ be the quadric
\begin{equation}\label{eq:intro-quadric}
\kappa^2(x^2+y^2+z^2)+2\kappa^3(xy+yz+zx)=(1-\kappa)^2(1+2\kappa).
\end{equation}

\begin{theoremB}[Theorem~\ref{thm:quadric}, Corollaries~\ref{cor:small} and~\ref{cor:large}]
Let $ABC$ be as in Theorem~A, and let $\eps=1$ for the outward and $\eps=-1$ for the inward construction.
\begin{enumerate}[label=(\arabic*)]
\item The construction with sign $\eps$ gives an equilateral $XYZ$ if and only if, for some $\kappa\in\{\cos\beta,-\cos\beta\}$, we have $(x,y,z)\in\cQ_\kappa$ and $\eps\kappa(x+y+z-xyz)>0$. Conversely, every point of $\cQ_\kappa$ in $(0,\tan\beta)^3$, outside the diagonal $x=y=z$ and the surface $x+y+z=xyz$, determines a spherical triangle whose construction with sign $\eps=\sgn(\kappa(x+y+z-xyz))$ gives an equilateral $XYZ$.
\item If $\beta\le\pi/3$, non-equilateral solutions only occur in the outward construction. They form a two-parameter family, parametrized by the part of the ellipsoid $\cQ_{\cos\beta}$ in the open positive octant with its diagonal point removed. For $\beta=\pi/3$ this is the family of \cite{DNV24}.
\item If $\beta>\pi/3$, there are two outward families and one inward family, and a second inward family exists if and only if $\cos\beta<1/4$.
\end{enumerate}
\end{theoremB}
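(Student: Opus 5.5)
We derive Theorem~B from Theorem~A by rewriting the criterion in the cotangent variables $x,y,z$.

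\textbf{Rewriting the criterion.} Let $\eps=\pm1$ be the construction sign and set $\Phi=\Theta+\eps\Omega/2$. By Theorem~A, $XYZ$ is equilateral if and only if $\cos\Phi=0$, with $\sigma=\sin\Phi=\pm1$. Expand
\[
\cos\Theta=\frac{xyz-(x+y+z)}{\sin\theta_a\sin\theta_b\sin\theta_c}\cdot\frac{1}{xyz}\cdot xyz\ \text{-type expressions},
\]
that is, $\cot\Theta=(xyz-x-y-z)/(xy+yz+zx-1)$, and likewise for $\sin\Theta$.

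\textbf{Expressing $\Omega$.} Lemma~\ref{lem:construction} gives $XBC$ isosceles with apex $2\beta$, so the side $a$ is a function of $\theta_a$ alone. By spherical trigonometry, $\cos\beta=\cos(a/2)\sin\theta_a$ type relations hold, i.e.\ $\tan(a/2)$ is an explicit function of $x$ and $\beta$. We then express $\cos(\Omega/2)$ and $\sin(\Omega/2)$ through the half-side formulas, e.g.\ $\cos(\Omega/2)=(1+\cos a+\cos b+\cos c)/(4\cos\frac a2\cos\frac b2\cos\frac c2)$. Substituting, $\cos\Phi=0$ should become a polynomial identity in $x,y,z$. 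It should factor as the product of the two quadrics $\cQ_{\cos\beta}$ and $\cQ_{-\cos\beta}$, with $\kappa=\sigma\eps\cos\beta$ or similar. The side condition $\sin\Phi=\sigma$ then becomes the sign condition $\eps\kappa(x+y+z-xyz)>0$, since $x+y+z-xyz$ carries the sign of $\cos\Theta$ up to positive factors. This algebraic reduction is the main obstacle. I would first verify it at the planar limit, where $x=\cot(\pi/2-\beta)=\tan\beta$ and $\kappa=1/2$ should give equality, and use this check to fix signs.

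\textbf{Converse.} Points of $(0,\tan\beta)^3$ correspond to acute base angles exceeding $\pi/2-\beta$. These are exactly the admissible sides in $(0,2\beta)$, and they give a triangle provided the triangle inequalities hold. I expect the quadric equation to force these, so the realizability check should be mild. Off the diagonal the triangle is non-equilateral. Off $x+y+z=xyz$ the sign $\eps$ is determined, and part (1) follows.

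\textbf{Parts (2) and (3).} Analyse $\cQ_\kappa$ restricted to the cube $(0,\tan\beta)^3$. The quadric's matrix has eigenvalues $\kappa^2+2\kappa^3$ (on the diagonal direction) and $\kappa^2-\kappa^3$ (twice). Thus $\cQ_{\cos\beta}$ is an ellipsoid. For $\kappa=-\cos\beta$, the diagonal eigenvalue $1-2\cos\beta$ changes sign at $\beta=\pi/3$, which marks the threshold.
\begin{itemize}
\item For $\beta\le\pi/3$, I would show $\cQ_{-\cos\beta}$ misses the open octant cube.
\item On the ellipsoid, I would show $x+y+z-xyz>0$, so only $\eps=1$ occurs.
\item For $\beta>\pi/3$, the surface $x+y+z=xyz$ splits each quadric piece into components, and the count of components with each sign gives the families.
\end{itemize}
The condition $\cos\beta<1/4$ should emerge as the point where the surface $xyz=x+y+z$ first meets $\cQ_{-\cos\beta}$ inside the cube, via a cubic inequality. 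I would locate it by restricting to the symmetric line $x=y$.
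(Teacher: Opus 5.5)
Your outline for (1), expressing everything through the half-area formulas and the cotangent expansion, is the right one, and your guess that a product of the two quadrics appears is correct. However, it appears only after squaring, and the two identities that make it work are exactly what you left as ``the main obstacle''.

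\textbf{Part (1) and the converse.} With $\cos(\Omega/2)=\alpha/(4g)$, $\sin(\Omega/2)=\delta/(4g)$, $\cos\Theta=-S(e_1-e_3)$ and $\sin\Theta=-S(1-e_2)$, the condition $\cos\Phi=0$ reads $\alpha(e_1-e_3)=\eps\delta(1-e_2)$. This is not polynomial in $x,y,z$. Only $\delta^2=2(1+p)(1+q)(1+r)-\alpha^2=16\cos^6\beta\,\bigl((1-e_2)^2+(e_1-e_3)^2\bigr)-\alpha^2$ is. Squaring and cancelling the positive factor $(1-e_2)^2+(e_1-e_3)^2$ gives $\alpha^2=16\cos^6\beta\,(1-e_2)^2$. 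By the identity $\alpha-4\kappa^3(1-e_2)=2Q_\kappa$, this is $Q_{\cos\beta}Q_{-\cos\beta}=0$.

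The squared equation no longer sees $\eps$, so the sign condition cannot come from ``$\sin\Phi=\sigma$''. On $\cQ_\kappa$ one has $\delta=4\cos^3\beta\,|e_1-e_3|$ and $J=(1-e_2)\bigl(4\kappa^3(e_1-e_3)-\eps\delta\bigr)$. So the sign condition comes from $\delta>0$, and the locus $e_2=1$, where the point lies on both quadrics, must be treated separately. The paper avoids squaring: $(\alpha-i\eps\delta)\bigl((1-e_2)+i(e_1-e_3)\bigr)$ has argument $-\pi/2-\Phi$. Hence $\cos\Phi=0$ makes it real and yields $\alpha=4\kappa^3(1-e_2)$ and $\eps\delta=4\kappa^3(e_1-e_3)$ at once, with $\kappa=-\sigma\cos\beta$ (not $\sigma\eps\cos\beta$). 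Note also that $e_1-e_3$ has the sign of $-\cos\Theta$, not of $\cos\Theta$.

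For the converse, the quadric does not force realizability by itself. The same computation gives $1+2pqr-p^2-q^2-r^2=16\cos^6\beta\,(e_1-e_3)^2$ on $\cQ_\kappa$. So removing $x+y+z=xyz$ is exactly what makes the Gram matrix positive definite, which also encodes $a+b+c<2\pi$ and not only the triangle inequalities. That surface is not merely where $\eps$ is undetermined.

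\textbf{Part (2).} The claim that $\cQ_{-\cos\beta}$ misses the cube fails at $\beta=\pi/3$, where $\cQ_{-1/2}$ is the whole diagonal line. What is true, and what your route can prove, is that it has no non-diagonal points there. For $c=\cos\beta>1/2$, the bound $e_2\le e_1^2/3$ forces $e_1\ge\sqrt3(1+c)/c>3\tan\beta$ on $\cQ_{-c}$. Once corrected, this is a legitimate alternative to the paper's argument. The paper instead uses $\Theta>3\pi/2-3\beta\ge\pi/2$ to force $\Phi=\pi/2$, hence $\sigma=1$, inwards, and then $\sigma=1\Rightarrow\beta>\pi/3$ from Theorem~\ref{thm:angle}. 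You still need that the positive part of $\cQ_{\cos\beta}$ lies in the cube ($K_\kappa<\tan^2\beta$) for the parametrization claim.

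\textbf{Part (3)} is aimed at the wrong object. The families are $\cQ_\kappa\cap\{\eps\kappa(e_1-e_3)>0\}$, and the second inward family is $\cQ_{\cos\beta}\cap\{e_1<e_3\}$, not a piece of $\cQ_{-\cos\beta}$. On $\cQ_{-\cos\beta}$ both signs of $e_1-e_3$ occur for every $\beta>\pi/3$ (the second outward and first inward families), so no threshold at $\cos\beta=1/4$ exists there. Restricting to $x=y$ could at best give existence. Non-existence for $\cos\beta\ge1/4$ needs a global argument, such as the bound $e_2\le(1-c)^2/c^2\le9$ on $\cQ_c$ combined with $e_1e_2\ge9e_3$. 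Finally, ``counting components'' does not prove the other three families exist. You need explicit admissible points, each checked to lie in the open cube:
\begin{itemize}
\item near $(0,0,\sqrt{K_{\pm c}})$ on $\cQ_{\pm c}$, where $e_1>e_3$;
\item near the diagonal point $(1+c)/(\sqrt3\,c)>\sqrt3$ of $\cQ_{-c}$, where $e_1<e_3$.
\end{itemize}
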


The situation is summarized in Table~\ref{tab:summary}. Here a \emph{family} refers to the direction of the construction and to the size of $XYZ$. We point out that the corresponding sets of input triangles need not be disjoint: a non-equilateral triangle may give an equilateral $XYZ$ in both constructions (Remark~\ref{rem:both}). Furthermore, whenever the chosen construction gives an equilateral $XYZ$ with side $\ell>\pi/2$, the corresponding triangle $DEF$ is equilateral as well, while this need not be the case when $\ell<\pi/2$ (Proposition~\ref{prop:DEF} and Remark~\ref{rem:DEF}).

\begin{table}[ht]
\centering
\renewcommand{\arraystretch}{1.25}
\begin{tabular}{@{}lccc@{}}
\toprule
 & $0<\beta\le\pi/3$ & $\pi/3<\beta\le\arccos\frac14$ & $\arccos\frac14<\beta<\pi/2$\\
\midrule
outward, $\cos\ell=-\frac{\cos\beta}{1+\cos\beta}$ & yes & yes & yes\\
outward, $\cos\ell=\frac{\cos\beta}{1-\cos\beta}$ & no & yes & yes\\
inward, $\cos\ell=\frac{\cos\beta}{1-\cos\beta}$ & no & yes & yes\\
inward, $\cos\ell=-\frac{\cos\beta}{1+\cos\beta}$ & no & no & yes\\
\bottomrule
\end{tabular}
\caption{Existence of non-equilateral triangles $ABC$ with equilateral $XYZ$, and the side $\ell$ of $XYZ$. Here $\arccos\frac14\approx75.52^\circ$.}
\label{tab:summary}
\end{table}

\subsection{Methods}
The basic idea of the proof of Theorem~A is to factor the differences between the inner products $\ip{X}{Y}$, $\ip{Y}{Z}$ and $\ip{Z}{X}$. For a non-equilateral input, these three quantities agree if and only if a single symmetric expression $J$ vanishes (Lemma~\ref{lem:difference}). The key observation is that $J$ is the imaginary part of the product of two complex numbers whose arguments are transparent. By the solid-angle formula of van Oosterom and Strackee \cite{vOS83}, the first one has argument $-\eps\Omega/2$; the second one is $(1+ix)(1+iy)(1+iz)$, whose argument is $-\pi/2-\Theta$ modulo $2\pi$. The upshot is that $J$ is a positive multiple of $-\cos(\Theta+\eps\Omega/2)$ (Lemma~\ref{lem:J}).

The same complex identity expresses $1+\cos a+\cos b+\cos c$ and the determinant $[A,B,C]$ in terms of $x,y,z$, which yields the quadrics and the sign condition of Theorem~B. The converse direction is achieved by realizing the data through a positive definite Gram matrix, and the threshold $\cos\beta=1/4$ follows from the inequality $e_1e_2\ge9e_3$ between elementary symmetric functions. Finally, in the appendix we give a second proof of the criterion, based on the composition of the rotations about $X,Y,Z$; this is the spherical counterpart of the classical proof of Napoleon's theorem.

The paper is organized as follows. In \S\ref{sec:prelim} we recall the facts we need from spherical trigonometry. In \S\ref{sec:construction} we describe the construction, and in \S\ref{sec:results} we state our results, which are proved in \S\S\ref{sec:proof1}--\ref{sec:proof2}. We give examples with exact data in \S\ref{sec:examples}, and collect some open questions in \S\ref{sec:questions}.

\section{Preliminaries}\label{sec:prelim}

We work on the unit sphere $\Sph\subset\R^3$ and identify each point with its position vector. We write $\ip{P}{Q}$ and $P\times Q$ for the Euclidean inner and cross products, and $[P,Q,R]\defeq\ip{P}{Q\times R}$. Recall that the spherical distance and the chord length are related by $d(P,Q)=\arccos\ip{P}{Q}$ and $|P-Q|=2\sin(d(P,Q)/2)$. For real numbers $x,y,z$ we write $e_1=x+y+z$, $e_2=xy+yz+zx$ and $e_3=xyz$ for their elementary symmetric functions. We refer to \cite{Tod86} for standard spherical trigonometry.

\subsection{Triangles}
Throughout the paper, a spherical triangle $ABC$ has linearly independent vertices, and its sides are the shorter great-circle arcs, of lengths $a=d(B,C)$, $b=d(C,A)$, $c=d(A,B)\in(0,\pi)$. We always take the convex region bounded by these arcs. It lies in an open hemisphere: by linear independence there is a vector $N$ with $\ip{N}{A}=\ip{N}{B}=\ip{N}{C}=1$, and the region lies in $\{U:\ip{N}{U}>0\}$. We order the vertices so that
\begin{equation}\label{eq:delta}
\delta\defeq[A,B,C]>0,
\end{equation}
and we put
\[
p=\cos a,\ q=\cos b,\ r=\cos c,\ \alpha=1+p+q+r,\ g=\cos\tfrac a2\cos\tfrac b2\cos\tfrac c2.
\]
The great circle through $B,C$ separates the sphere into two open hemispheres. We call the hemisphere containing $A$, namely $\{P:\ip{P}{B\times C}>0\}$, the \emph{inner side} of $BC$, and the other one the \emph{outer side}; the same terminology applies cyclically to $CA$ and $AB$. Note that an inward apex lies on the inner side of the relevant great circle, but need not lie inside $ABC$.

The Gram determinant of $A,B,C$ gives
\begin{equation}\label{eq:gram}
\delta^2=1+2pqr-p^2-q^2-r^2 .
\end{equation}
Conversely, let $p,q,r\in(-1,1)$ with $1+2pqr-p^2-q^2-r^2>0$. The matrix
\[
G=\begin{pmatrix}1&r&q\\ r&1&p\\ q&p&1\end{pmatrix}
\]
has leading principal minors $1$, $1-r^2$ and $\det G=1+2pqr-p^2-q^2-r^2$, hence is positive definite. It is therefore the Gram matrix of three unit vectors, and there is a spherical triangle, unique up to isometry, with side cosines $p,q,r$.

By Girard's theorem, the area of $ABC$ is its spherical excess $\Omega=\mathcal A+\mathcal B+\mathcal C-\pi$, where $\mathcal A,\mathcal B,\mathcal C$ are its angles. Since $ABC$ lies in an open hemisphere, we have $0<\Omega<2\pi$.

\subsection{An area formula}

\begin{lemma}\label{lem:area}
For every spherical triangle,
\begin{equation}\label{eq:area}
\alpha+i\delta=4g\,e^{i\Omega/2}.
\end{equation}
\end{lemma}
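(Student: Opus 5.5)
We will prove \eqref{eq:area} by showing that the two sides have the same modulus and the same argument.

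\emph{Modulus.} We show $\alpha^2+\delta^2=16g^2$. By \eqref{eq:gram},
\[
\alpha^2+\delta^2=(1+p+q+r)^2+1+2pqr-p^2-q^2-r^2 .
\]
Expanding the square, this equals
\[
2+2(p+q+r)+2(pq+qr+rp)+2pqr=2(1+p)(1+q)(1+r).
\]
On the other hand $1+p=2\cos^2\frac a2$, and similarly for $q,r$, so
\[
16g^2=16\cos^2\tfrac a2\cos^2\tfrac b2\cos^2\tfrac c2=2(1+p)(1+q)(1+r).
\]
Since $g>0$ (the sides lie in $(0,\pi)$), we get $|\alpha+i\delta|=4g$.

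\emph{Argument.} By the previous step, $\alpha+i\delta=4g\,e^{i\varphi}$ for some real $\varphi$, defined modulo $2\pi$. We must show $\varphi\equiv\Omega/2$. Since $\delta>0$ by \eqref{eq:delta}, we may take $\varphi\in(0,\pi)$. Because $\Omega/2\in(0,\pi)$ as well, it suffices to prove $\cot\varphi=\cot(\Omega/2)$, that is,
\[
\cot\frac{\Omega}{2}=\frac{1+\cos a+\cos b+\cos c}{[A,B,C]}.
\]
This is the van Oosterom--Strackee formula on the unit sphere. To prove it from the angles, we use the spherical law of cosines, $\cos\mathcal A=(p-qr)/(\sin b\sin c)$, and $\sin\mathcal A=\delta/(\sin b\sin c)$. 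We then expand $\cos(\mathcal A+\mathcal B+\mathcal C)$ and $\sin(\mathcal A+\mathcal B+\mathcal C)$, noting that
\[
e^{i(\Omega+\pi)}=e^{i\mathcal A}e^{i\mathcal B}e^{i\mathcal C}.
\]
This expansion is messy, so we take a cleaner route through Cagnoli's and L'Huilier-type half-angle identities. Write
\[
\sin\frac\Omega2=\frac{\delta}{4g},\qquad \cos\frac\Omega2=\frac{1+p+q+r}{4g}.
\]
Both are classical (Todhunter \cite{Tod86}): the first is Cagnoli's formula, and the second follows from it together with the modulus computation.

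To keep the argument self-contained, we prove the sine formula by the following argument, which avoids the messy expansion. Compute
\[
\sin\frac\Omega2=-\cos\Bigl(\frac{\mathcal A+\mathcal B+\mathcal C}{2}\Bigr).
\]
Using Delambre's analogies for $\cos\frac{\mathcal A+\mathcal B}{2}$ and the half-angle formulas for $\sin\frac{\mathcal C}{2}$ and $\cos\frac{\mathcal C}{2}$, this reduces to
\[
\frac{\sin a\sin b\sin\mathcal C}{4\cos\frac a2\cos\frac b2\cos\frac c2}.
\]
Since $\sin a\sin b\sin\mathcal C=\delta$, the sine formula follows. Given $\sin\frac\Omega2=\delta/(4g)$ and $\Omega/2\in(0,\pi)$, we obtain $\cos\frac\Omega2=\pm\alpha/(4g)$ from the modulus identity.

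\emph{Main obstacle: the sign of the cosine.} The modulus identity alone does not fix the sign in $\cos\frac\Omega2=\pm\alpha/(4g)$. We fix it by continuity. The spherical triangles with $\delta>0$ form a connected set, for instance via the Gram-matrix parametrization above. On this set, $\alpha=0$ forces $\delta=4g$, which gives $\sin\frac\Omega2=1$, so $\Omega=\pi$; hence $\cos\frac\Omega2$ and $\alpha$ vanish together. Near degenerate small triangles we have $\alpha\approx4$, $g\approx1$ and $\Omega\approx0$, so the sign is $+$ there. Both $\cos\frac\Omega2$ and $\alpha/(4g)$ are continuous and agree up to a sign; since their common zero set is where both vanish, and they agree near small triangles, the sign is $+$ on the whole connected set.
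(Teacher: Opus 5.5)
Your modulus computation and your derivation of $\sin\frac\Omega2=\delta/(4g)$ from Delambre's analogies are correct. The final sign argument, however, has a genuine gap.

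Let $T$ be the convex, hence connected, set of admissible $(p,q,r)$. Removing the common zero set $\{\alpha=0\}$ splits $T$ into two nonempty open pieces, $T\cap\{\alpha>0\}$ and $T\cap\{\alpha<0\}$. For instance, $p=q=-3/5$, $r=1/5$ is a triangle with $\alpha=0$, and $p=q=r=-2/5$ is one with $\alpha<0$. Away from the zero set, the sign in $\cos\frac\Omega2=\pm\alpha/(4g)$ is only locally constant. So your check near small triangles fixes it on $\{\alpha>0\}$ and says nothing about $\{\alpha<0\}$. Indeed, the continuous function $|\alpha|/(4g)$ has the same square and the same zero set as $\alpha/(4g)$, and agrees with it near small triangles. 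The properties you invoke therefore cannot rule out $\cos\frac\Omega2=|\alpha|/(4g)$. As written, your argument proves the lemma only when $\alpha\ge0$. The case $\alpha<0$, which is exactly where the argument of $\alpha+i\delta$ differs from $\arctan(\delta/\alpha)$, is left open.

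The cleanest repair is to run your Delambre computation for the cosine as well. Since $\cos\frac\Omega2=\sin\frac{\mathcal A+\mathcal B+\mathcal C}{2}$, the analogies $\sin\frac{\mathcal A+\mathcal B}{2}\cos\frac c2=\cos\frac{a-b}{2}\cos\frac{\mathcal C}{2}$ and $\cos\frac{\mathcal A+\mathcal B}{2}\cos\frac c2=\cos\frac{a+b}{2}\sin\frac{\mathcal C}{2}$ give $\cos\frac\Omega2\cos\frac c2=\cos\frac a2\cos\frac b2+\sin\frac a2\sin\frac b2\cos\mathcal C$. Multiplying by $4\cos\frac a2\cos\frac b2$ and using $\sin a\sin b\cos\mathcal C=r-pq$ yields $4g\cos\frac\Omega2=(1+p)(1+q)+r-pq=\alpha$. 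Together with your sine formula this gives the lemma directly, and both the modulus step and the continuity argument become unnecessary.

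If you prefer to keep your route, there are two other fixes. You can check the sign at one point of the convex set $T\cap\{\alpha<0\}$, e.g.\ $p=q=r=-2/5$, where $\cos\mathcal A=-2/3$ and $\Omega\approx215^\circ>\pi$. Alternatively, note that $\cos\frac\Omega2\pm\alpha/(4g)$ are real-analytic on the connected open set $T$ with identically vanishing product; hence one of them vanishes identically, and your small-triangle check decides which.

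With this repair, your argument becomes a self-contained proof of the van Oosterom--Strackee formula from Delambre's analogies and the law of cosines. The paper, by contrast, proves only the modulus identity and cites that formula for the argument.
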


\begin{proof}
For all real $p,q,r$ we have the polynomial identity
\begin{equation}\label{eq:poly}
2(1+p)(1+q)(1+r)-(1+p+q+r)^2=1+2pqr-p^2-q^2-r^2 .
\end{equation}
Together with \eqref{eq:gram} and $2\cos^2(a/2)=1+p$, it gives $\alpha^2+\delta^2=16g^2$. On the other hand, the solid-angle formula of van Oosterom and Strackee \cite{vOS83} reads
\[
\frac{\Omega}{2}=\operatorname{Arg}(\alpha+i\delta)\in(0,\pi),
\]
the argument lying in $(0,\pi)$ since $\delta>0$. For $\alpha\ne0$ this is the relation $\tan(\Omega/2)=\delta/\alpha$, and for $\alpha=0$ it gives $\Omega=\pi$; the latter case does occur, e.g.~for $p=q=-3/5$ and $r=1/5$. Combining the argument and the modulus, we obtain \eqref{eq:area}. Note that its real and imaginary parts, $\cos(\Omega/2)=\alpha/(4g)$ and $\sin(\Omega/2)=\delta/(4g)$, are the classical half-area formulas of Euler and Cagnoli \cite{Tod86}.
\end{proof}

\subsection{Right triangles and circumcentres}
In a right spherical triangle with hypotenuse $h$, other angles $\mu,\nu$, and leg $m$ opposite to $\mu$, Napier's rules give
\begin{equation}\label{eq:napier}
\cos\mu=\cos m\sin\nu,\ \ \cos h=\cot\mu\cot\nu,\ \ \sin m=\sin h\sin\mu .
\end{equation}

Three points of $\Sph$ not lying on a great circle span a plane $\{U:\ip{N}{U}=t\}$, where $|N|=1$ and $0<t<1$. Their circumcircle is the intersection of this plane with $\Sph$, and its Euclidean centre is $G=tN$. The points $N$ and $-N$ have common distances $\arccos t$ and $\pi-\arccos t$ from the three points, and we call $N$, the one at distance less than $\pi/2$, their \emph{spherical circumcentre}. For a point $P$ of the circle, the tangent vector at $N$ pointing towards $P$ is a positive multiple of $P-\ip{N}{P}N=P-G$. We deduce that the spherical angle at $N$ between the arcs to two points $P,Q$ of the circle equals the Euclidean angle $\angle PGQ$.

\section{The construction}\label{sec:construction}

Recall that $\beta\in(0,\pi/2)$ is fixed.

\begin{definition}
Let $B,C\in\Sph$ be distinct and not antipodal. A point $D\in\Sph$ is a \emph{$\beta$-point} of $BC$ if $|DB|=|DC|$ and the Euclidean angle $\angle BDC$ equals $\beta$.
\end{definition}

Let $M=(B+C)/2$, so that $|M|=\cos(a/2)$ and $|B-M|=\sin(a/2)$. A $\beta$-point $D$ lies on the great circle $\Sigma$ bisecting $BC$ perpendicularly, and the chord triangle $DBC$ has height $|D-M|=\sin(a/2)\cot(\beta/2)$. For $D\in\Sigma$ we have $|D-M|^2=1-2\ip{D}{M}+\cos^2(a/2)$, so that the height condition is equivalent to
\[
\Bigl\langle D,\frac{M}{|M|}\Bigr\rangle=h,\ \text{ where } h=\frac{\cos^2(a/2)-\cos\beta}{(1-\cos\beta)\cos(a/2)} .
\]
Since $0<\cos(a/2)<1$, we have $h<1$, and factoring $h+1$ shows that $h>-1$ if and only if $a<2\beta$. When $-1<h<1$, the two points of $\Sigma$ with this property are exchanged by the reflection in the plane of the great circle $BC$. Thus:
\begin{itemize}[nosep]
\item if $a<2\beta$, there is exactly one $\beta$-point on each side of the great circle $BC$;
\item if $a=2\beta$, the two points coalesce at $-(B+C)/|B+C|$, which lies on that great circle, and the circumcentre at distance less than $\pi/2$ does not exist;
\item if $a>2\beta$, there is no $\beta$-point.
\end{itemize}
From now on we assume that
\begin{equation}\label{eq:domain}
a,b,c<2\beta .
\end{equation}
We let $\eps=1$ for the outward construction and $\eps=-1$ for the inward construction. We let $D,E,F$ be the $\beta$-points of $BC,CA,AB$ on the outer sides if $\eps=1$ (resp.~on the inner sides if $\eps=-1$), and $X,Y,Z$ the spherical circumcentres of $(D,B,C)$, $(E,C,A)$, $(F,A,B)$.

For the side $BC$ we define
\begin{equation}\label{eq:x}
x\defeq\frac{\sqrt{\cos^2(a/2)-\cos^2\beta}}{\cos\beta}, \text{ so that } 1+\cos a=2\cos^2\beta\,(1+x^2),
\end{equation}
and we define $y,z$ from $b,c$ in the same way. The condition \eqref{eq:domain} gives $0<x,y,z<\tan\beta$. Since $x$ is a strictly decreasing function of $a$, two of the coordinates $x,y,z$ are equal if and only if the corresponding sides are.

\begin{lemma}\label{lem:construction}
\begin{enumerate}[label=\textup{(\roman*)}]
\item The circumcentre $X$ is
\begin{equation}\label{eq:X}
X=\frac{x(B+C)-\eps\,B\times C}{\tan\beta\,(1+p)},
\end{equation}
and $\ip{X}{B}=\ip{X}{C}=x\cot\beta$.
\item The spherical triangle $XBC$ is isosceles with apex angle $2\beta$. Its base angle $\theta_a$ is acute and satisfies
\begin{equation}\label{eq:theta}
\cot\theta_a=x,\ \ \sin\theta_a=\frac{\cos\beta}{\cos(a/2)} .
\end{equation}
\item The $\beta$-point is $D=(1+\sec\beta)\,x\cot\beta\,X-\tfrac12\sec\beta\,(B+C)$, and it lies on the same side of $BC$ as $X$.
\end{enumerate}
The analogous statements hold for $Y,E$ and $Z,F$.
\end{lemma}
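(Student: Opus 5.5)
The plan is to work in the orthogonal frame $M=(B+C)/2$, $B-C$, $B\times C$. I will locate $D$ explicitly in this frame, compute the Euclidean circumcentre of $D,B,C$, and read off $X$ from it. The vectors $B+C$ and $B\times C$ are orthogonal, with $|B+C|^2=2(1+p)$ and $|B\times C|^2=1-p^2$. Everything equidistant from $B$ and $C$ lies in their span, which is the plane of $\Sigma$.

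\textbf{Part (i).} I will write $X=\lambda(B+C)-\eps\mu\,B\times C$ with $\lambda,\mu>0$. The sign convention for $\mu$ says that $X$ lies on the outer side when $\eps=1$, since $\ip{X}{B\times C}=-\eps\mu(1-p^2)$; part (iii) will confirm this choice. The key geometric input is the end of \S\ref{sec:prelim}: the spherical angle at $X$ between the arcs to $B$ and $C$ equals the Euclidean central angle $\angle BGC$, where $G=tX$. By the inscribed angle theorem in the circumcircle this angle is $2\angle BDC=2\beta$, because $\beta<\pi/2$ and $D$ lies on the major arc.

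So $X$ is the point on $\Sigma$ with $|B-G|=\sin a/2\,/\sin\beta$ and $G=\ip{X}{B}X$. This gives $\sin^2 d(X,B)=\sin^2(a/2)/\sin^2\beta$, hence
\[
\cos^2 d(X,B)=\frac{\sin^2\beta-\sin^2(a/2)}{\sin^2\beta}=\frac{\cos^2(a/2)-\cos^2\beta}{\sin^2\beta}=x^2\cot^2\beta .
\]
This yields $\ip{X}{B}=x\cot\beta$, which is positive since $d(X,B)<\pi/2$. Then $\lambda(1+p)=x\cot\beta$, and $\mu$ is fixed by $|X|=1$:
\[
2\lambda^2(1+p)+\mu^2(1-p^2)=1 .
\]
Using \eqref{eq:x}, one checks that $\mu=\lambda/x$, which gives \eqref{eq:X}. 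This is a routine verification: it reduces to $1+p=\cos^2\beta(1+x^2)\cdot 2$ after multiplying out.

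\textbf{Part (ii).} The triangle $XBC$ is isosceles since $d(X,B)=d(X,C)$, and its apex angle is $2\beta$ by the inscribed-angle argument above. For the base angle I drop the perpendicular from $X$ to the midpoint of arc $BC$, obtaining a right triangle with hypotenuse $h=d(X,B)$, angles $\mu=\theta_a$ at $B$ and $\nu=\beta$ at $X$, and leg $a/2$ opposite $\beta$. Napier's rules \eqref{eq:napier} then give:
\begin{itemize}[nosep]
\item $\cos h=\cot\theta_a\cot\beta$, so $\cot\theta_a=x$;
\item $\cos\beta=\cos(a/2)\sin\theta_a$, which is the second formula in \eqref{eq:theta}.
\end{itemize}
Acuteness follows from $x>0$.

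\textbf{Part (iii).} I will verify that the proposed $D$ lies on $\Sigma$, is a unit vector, and satisfies $\ip{D}{M/|M|}=h$. Only the $X$-component contributes to the $B\times C$ direction, with coefficient $(1+\sec\beta)x\cot\beta>0$. This shows that $D$ and $X$ lie on the same side, and hence $D$ is the correct $\beta$-point, given the uniqueness of the $\beta$-point on each side established in \S\ref{sec:construction}.

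The inner-product condition is a short computation using $\ip{X}{B+C}=2x\cot\beta$. For unit length I expand
\[
|D|^2=c_1^2+c_2^2\cdot 2(1+p)-2c_1c_2\cdot 2x\cot\beta
\]
and substitute \eqref{eq:x}.

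The main obstacle is purely this algebraic bookkeeping. A cleaner route for (iii) is to note that $D$ must be $G+R\,u$, where $u$ is the unit vector in $\Sigma$ opposite to $M-G$, and to rederive the coefficients from that description. I would also double-check that the inscribed-angle step puts $D$ on the arc where the angle is $\beta$ rather than $\pi-\beta$. That holds because the centre $G$ and $D$ lie on the same side of chord $BC$ when $\beta<\pi/2$.
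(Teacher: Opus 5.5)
Your proof is correct in substance, but it runs in the opposite direction from the paper's. The paper posits the right-hand side of \eqref{eq:X} as a candidate and checks $|X|=1$ and $\ip{X}{B}=\ip{X}{C}=t$. It then computes $\cos\angle BGC=(p-t^2)/(1-t^2)=\cos2\beta$ and exhibits $D'=G+\sec\beta\,(G-M)$ on the circle $\Gamma$ about $X$. The inscribed angle theorem, symmetry, and the sign of $\ip{D'}{B\times C}=(1+\sec\beta)\,t\,\ip{X}{B\times C}$ identify $D'$ with the chosen $D$, so the candidate is the circumcentre by construction. You start instead from the actual $D$. The inscribed angle theorem gives the central angle $2\beta$, hence the circumradius $\sin(a/2)/\sin\beta$ and $\cos d(X,B)=x\cot\beta$. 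This explains where $x\cot\beta$ comes from, and it yields the apex angle without computing $\cos\angle BGC$.

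The price is that $\ip{X}{B}$ and $|X|=1$ determine the $B\times C$-coefficient of $X$ only up to sign, and your main route for (iii) does not settle it. That route shows the formula produces a $\beta$-point on the same side as $X$, then invokes uniqueness of the $\beta$-point on each side; this presupposes which side $X$ is on. What does close the argument is your ``cleaner route''. In the actual configuration, $D$ lies on $\Gamma$ and in the plane of $\Sigma$, hence on the line through $M$ and $G$. It also lies on the major arc, so $D=G+\sec\beta\,(G-M)$, because $|G-M|=R\cos\beta$. Then $\ip{D}{B\times C}$ and $\ip{X}{B\times C}$ have the same sign. This fixes the sign $-\eps$ in \eqref{eq:X} and proves (iii) at once; it is precisely the paper's $D'$. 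If you prefer to read your argument as a verification, you must add the one-line check $\ip{X}{D}=x\cot\beta$, so that the candidate really is the circumcentre of the chosen $D$.

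A small slip in (ii): with the convention of \eqref{eq:napier}, obtaining $\cos\beta=\cos(a/2)\sin\theta_a$ requires $\mu=\beta$ (the angle at $X$, opposite the leg $a/2$) and $\nu=\theta_a$, not the labelling you wrote.
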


\begin{proof}
We first define a candidate $X$ by the right-hand side of \eqref{eq:X}, and then show that it is the circumcentre of the chosen $D,B,C$. The vectors $B+C$ and $B\times C$ are orthogonal, with squared lengths $2(1+p)$ and $1-p^2$, so that \eqref{eq:x} gives
\[
|x(B+C)-\eps B\times C|^2=(1+p)(2x^2+1-p)=4\sin^2\beta\cos^2\beta\,(1+x^2)^2=\bigl(\tan\beta\,(1+p)\bigr)^2 .
\]
Thus $|X|=1$, and $\ip{X}{B}=\ip{X}{C}=x\cot\beta\eqdef t\in(0,1)$. Let $\Gamma=\{\ip{X}{U}=t\}\cap\Sph$, with centre $G=tX$ and radius $R=\sqrt{1-t^2}$. Using \eqref{eq:x}, we get
\[
\cos\angle BGC=\frac{\ip{B-G}{C-G}}{R^2}=\frac{p-t^2}{1-t^2}=\cos2\beta,
\]
so that the spherical angle $BXC$ equals $2\beta$.

The component of $X$ in the plane spanned by $B,C$ is a positive multiple of $B+C$. Hence the perpendicular from $X$ to the great circle $BC$ meets the shorter arc at its midpoint $(B+C)/|B+C|$, and divides $XBC$ into two right triangles, each with hypotenuse $d(X,B)$, angle $\beta$ at $X$, and leg $a/2$ opposite to $X$. If $\theta_a$ denotes the angle at $B$, the second rule in \eqref{eq:napier} gives $\cot\beta\cot\theta_a=\cos d(X,B)=x\cot\beta$. Hence $\cot\theta_a=x>0$, and $\theta_a$ is acute. The first rule gives $\cos\beta=\cos(a/2)\sin\theta_a$.

Let $M=(B+C)/2$. Since the central angle is $2\beta<\pi$, we have $|G-M|=R\cos\beta$, so that the point $D'=G+\sec\beta\,(G-M)$ lies on $\Gamma$. It lies on the perpendicular bisector of $BC$, beyond $G$ as seen from $M$, hence on the major arc determined by the chord $BC$. The inscribed angle theorem gives $\angle BD'C=\beta$, and $|D'B|=|D'C|$ by symmetry. Finally, $\ip{X}{B\times C}=-\eps(1-p)/\tan\beta$ and $\ip{D'}{B\times C}=(1+\sec\beta)\,t\,\ip{X}{B\times C}$ both have the sign of $-\eps$. We conclude that $D'$ is the chosen $\beta$-point $D$, and that $X$ is its spherical circumcentre. Expanding the expression for $D'$ gives (iii).
\end{proof}

\begin{figure}[H]
\centering
\begin{tikzpicture}[scale=1.0]
  \coordinate (B) at (0,0);
  \coordinate (C) at (5,0);
  \coordinate (X) at (2.5,1.356);
  \coordinate (D) at (2.5,4.2);
  \coordinate (A) at (1.6,-2.1);
  \draw[densely dotted,gray] (2.5,1.356) circle (2.844);
  \draw[thick] (B) to[bend right=6] (C);
  \draw[thick] (A) to[bend left=6] (B);
  \draw[thick] (A) to[bend right=6] (C);
  \draw (B) -- (X) -- (C);
  \draw[dashed] (B) -- (D) -- (C);
  \draw[dashed,gray] (D) -- (2.5,-0.13);
  \pic[draw,angle radius=5mm,"$2\beta$",angle eccentricity=1.6] {angle=B--X--C};
  \pic[draw,angle radius=9mm,"$\theta_a$",angle eccentricity=1.35] {angle=C--B--X};
  \pic[draw,angle radius=6mm,"$\beta$",angle eccentricity=1.5] {angle=B--D--C};
  \foreach \P/\pos in {B/left,C/right,X/above right,D/above,A/below}
     \fill (\P) circle (1.3pt) node[\pos] {$\P$};
  \node[gray] at (5.4,3.1) {$\Gamma$};
\end{tikzpicture}
\caption{One side of the outward construction, seen from the direction of $X$, so that $X$ and the Euclidean centre $G=tX$ of $\Gamma$ have the same image. The chord triangle $DBC$ (dashed) is isosceles with apex angle $\beta$, and $X$ is the apex of the isosceles spherical triangle on $BC$ with apex angle $2\beta$ and acute base angle $\theta_a$. The picture is schematic: spherical angles are only indicated.}
\label{fig:one-side}
\end{figure}

\begin{remark}\label{rem:equivalent}
Lemma~\ref{lem:construction} provides an intrinsic description of the construction: on each side of $ABC$, erect the isosceles spherical triangle with apex angle $2\beta$ and acute base angles, the three apices being taken all outwards or all inwards. We stress that the condition on the base angles cannot be dropped: on each side of $BC$ there is also an isosceles triangle with the same apex angle and obtuse base angles, whose apex is $(-x(B+C)-\eps B\times C)/(\tan\beta\,(1+p))$. In the plane the base angle equals $\pi/2-\beta$. On the sphere, \eqref{eq:theta} instead gives $\sin\theta_a>\cos\beta$, and therefore
\begin{equation}\label{eq:theta-bound}
\theta_a>\frac\pi2-\beta \text{ and } \Theta=\theta_a+\theta_b+\theta_c>\frac{3\pi}2-3\beta .
\end{equation}
For $\beta=\pi/3$ we have $\sec\beta=2$, and Lemma~\ref{lem:construction}(iii) gives $D+B+C=3x\cot\beta\,X$. Hence $X=(D+B+C)/|D+B+C|$ is the centroid of the equilateral spherical triangle $DBC$, which is the centre used in \cite{DNV24}.
\end{remark}

\section{Main results}\label{sec:results}

By \eqref{eq:theta}, we have $1+ix=i\,e^{-i\theta_a}/\sin\theta_a$, and similarly for $y,z$. Multiplying these three identities, we obtain
\begin{equation}\label{eq:product}
(1-e_2)+i(e_1-e_3)=(1+ix)(1+iy)(1+iz)=-\frac{\sin\Theta+i\cos\Theta}{\sin\theta_a\sin\theta_b\sin\theta_c}.
\end{equation}
Since $0<\Theta<3\pi/2$, this shows that $e_1-e_3>0$ if and only if $\Theta>\pi/2$, and that $e_2=1$ if and only if $\Theta=\pi$.

\begin{theorem}\label{thm:angle}
Let $ABC$ be a non-equilateral spherical triangle satisfying \eqref{eq:domain}, let $X,Y,Z$ be obtained by the construction with sign $\eps$, and put $\Phi=\Theta+\eps\Omega/2$. Then $XYZ$ is equilateral if and only if $\cos\Phi=0$. In this case, let $\sigma=\sin\Phi\in\{1,-1\}$. If $\sigma=1$, then $\beta>\pi/3$. The triangle $XYZ$ is nondegenerate, and its side $\ell$ satisfies
\begin{equation}\label{eq:side}
\cos\ell=\frac{\sigma\cos\beta}{1-\sigma\cos\beta}.
\end{equation}
\end{theorem}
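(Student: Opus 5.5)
The plan is to work directly with the explicit formula \eqref{eq:X} for $X,Y,Z$ and compute the three inner products. Write $s=\tan\beta$. Then
\[
s^2(1+p)(1+q)\,\ip{X}{Y}=\ip{x(B+C)-\eps B\times C}{y(C+A)-\eps C\times A}.
\]
We expand this with $\ip{B\times C}{C\times A}=\ip{B}{C}\ip{C}{A}-\ip{B}{A}\ip{C}{C}=pq-r$ and with terms like $\ip{B+C}{C\times A}=[B,C,A]=\delta$. This should give
\[
s^2(1+p)(1+q)\ip{X}{Y}=xy(r+q+p+1)-\eps\delta(x+y)+(pq-r),
\]
up to signs I will verify. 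Next I substitute $1+p=2\cos^2\beta(1+x^2)$ and its analogues. This lets me express $p,q,r$, and hence $pq-r$, as polynomials in $x,y,z$ with coefficient $k=\cos^2\beta$. With these, $\ip{X}{Y}$, $\ip{Y}{Z}$ and $\ip{Z}{X}$ become rational functions of $(x,y,z)$, $\alpha$ and $\delta$.

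The main algebraic step is to compute the differences, e.g.\ $\ip{X}{Y}-\ip{Z}{X}$, and show each factors as $(y-z)$ times a common symmetric factor. A natural guess for that factor is
\[
J=\alpha(e_1-e_3)\cdot(\text{const})+\eps\delta(1-e_2)\cdot(\text{const})+\dots
\]
This is the content of Lemma~\ref{lem:difference} and Lemma~\ref{lem:J}. Given non-equilateral input, two of the factors $x-y$, $y-z$, $z-x$ are nonzero, and the three inner products agree iff $J=0$. To identify $J$, I would write $\alpha-i\eps\delta=4g\,e^{-i\eps\Omega/2}$ using Lemma~\ref{lem:area}. Together with \eqref{eq:product}, this recognizes $J$ as a positive multiple of
\[
\operatorname{Im}\bigl[(\alpha-i\eps\delta)(1+ix)(1+iy)(1+iz)\bigr]=-\frac{4g}{\prod\sin\theta}\cos(\Theta+\eps\Omega/2).
\]
So $J=0$ iff $\cos\Phi=0$. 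I expect the hard part to be finding exactly the right combination so that the difference factors cleanly. I would first test it symbolically in the planar limit, or at $y=z$.

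For the side length, I would take $\cos\Phi=0$, so that the product of the two complex numbers is real: it equals $\sigma\cdot 4g/\prod\sin\theta$ with sign read off from $\sin\Phi=\sigma$. Then $\alpha(1-e_2)+\eps\delta(e_1-e_3)$ equals $\sigma|{\cdot}|$ times a known modulus. I substitute this into the symmetrized sum $\ip{X}{Y}+\ip{Y}{Z}+\ip{Z}{X}$, which is easier to compute than any single term, and obtain $\cos\ell=\sigma\cos\beta/(1-\sigma\cos\beta)$. Nondegeneracy follows since $\cos\ell\ne1$: for $\sigma=1$ we have $\cos\beta/(1-\cos\beta)=1$ only at $\cos\beta=1/2$, which the next step excludes. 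One also checks that $X,Y,Z$ are not collinear on a great circle, which holds because an equilateral triple with $\cos\ell>-1/2$ spans $\R^3$. Here $-\cos\beta/(1+\cos\beta)>-1/2$ always.

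For $\sigma=1$, the requirement $\cos\ell<1$ forces $\cos\beta<1/2$, i.e.\ $\beta>\pi/3$. Alternatively, this follows from \eqref{eq:theta-bound}: $\Phi=\pi/2+2\pi m$ together with $\Theta>3\pi/2-3\beta$ and $|\eps\Omega/2|<\pi$ constrains $\beta$. The cleaner route is the inequality $\cos\ell\le 1$ with equality excluded, using that the points are distinct by the factorization.
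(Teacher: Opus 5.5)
Your derivation of the criterion follows the paper's route. Your expansion is exactly $N_{xy}$, and the common factor is exactly $J=\alpha(e_1-e_3)-\eps\delta(1-e_2)$, with no free constants. The factorization $(1+z^2)N_{xy}-(1+y^2)N_{zx}=(y-z)J$ rests on the identity $(1+r)(pq-r)-(1+q)(rp-q)=(q-r)\alpha$.

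\textbf{The side length.} The main gap is here. First, when $\cos\Phi=0$ the product equals $-4\sigma g/S$ with $S=\sin\theta_a\sin\theta_b\sin\theta_c$, not $+4\sigma g/S$. With your sign the two values in \eqref{eq:side} come out interchanged. More importantly, the real-part relation is only one linear condition on $(\alpha,\eps\delta)$, and substituting it into $\ip{X}{Y}+\ip{Y}{Z}+\ip{Z}{X}$ does not yield a constant. You must combine it with $J=0$, $|(1+ix)(1+iy)(1+iz)|=1/S$ and $gS=\cos^3\beta$ (from \eqref{eq:theta}). This gives $\alpha=4\kappa^3(1-e_2)$ and $\eps\delta=4\kappa^3(e_1-e_3)$ with $\kappa=-\sigma\cos\beta$. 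Even then the inner products are not visibly constant. Indeed, $\alpha=2\cos^2\beta\,(3+x^2+y^2+z^2)-2$ is itself a function of $x,y,z$, so the first relation is the quadric $Q_\kappa=0$ of \eqref{eq:alphaQ}. Only modulo this quadric does \eqref{eq:NQ} give $N_{xy}=-4\kappa^3(1-\kappa)(1+x^2)(1+y^2)$, hence $\ip{X}{Y}=-\kappa/(1+\kappa)$. Symmetrizing does not remove the need for this reduction. Your ``and obtain'' skips exactly the computation that proves \eqref{eq:side}.

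\textbf{The case $\sigma=1$.} There is also a hole here. You need to exclude $\cos\beta=1/2$, both for the strict inequality $\beta>\pi/3$ and for nondegeneracy. For that you must know that $X,Y,Z$ do not coincide, and ``distinct by the factorization'' does not give this. Nothing in the factorization of the differences rules out $X=Y=Z$, which is compatible with $J=0$. The paper's argument is one line. By Lemma~\ref{lem:construction}, $\ip{X}{C}=x\cot\beta$ and $\ip{Y}{C}=y\cot\beta$, so $X=Y$ forces $x=y$. Likewise $Y=Z$ forces $y=z$, contradicting non-equilaterality.

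Your alternative via \eqref{eq:theta-bound} should be dropped. It works outward, where $\Phi=\pi/2$ forces $\Theta<\pi/2$. Inward, however, $\Phi=\pi/2$ means $\Theta=\pi/2+\Omega/2$, and the bound only gives $\beta>\pi/3-\Omega/6$. Your final Gram-eigenvalue argument for nondegeneracy is the same as the paper's.
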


\begin{remark}\label{rem:branches}
Since $0<\theta_a,\theta_b,\theta_c<\pi/2$ and $0<\Omega<2\pi$, the condition $\cos\Phi=0$ means that
\begin{equation}\label{eq:branches}
\Theta+\tfrac12\Omega\in\{\tfrac12\pi,\tfrac32\pi\} \text{ (outward)}, \ \ \Theta-\tfrac12\Omega\in\{-\tfrac12\pi,\tfrac12\pi\} \text{ (inward)}.
\end{equation}
The value $\pi/2$ corresponds to $\sigma=1$, and the other value in each case to $\sigma=-1$. Under this condition, one reads off from \eqref{eq:branches} that $\Theta>\pi/2$ if and only if $\eps\sigma=-1$.
\end{remark}

For $\kappa\in\{\cos\beta,-\cos\beta\}$, we let $\cQ_\kappa\subset\R^3$ be the quadric
\begin{equation}\label{eq:quadric}
\kappa^2(x^2+y^2+z^2)+2\kappa^3(xy+yz+zx)=(1-\kappa)^2(1+2\kappa).
\end{equation}

\begin{theorem}\label{thm:quadric}
\begin{enumerate}[label=\textup{(\alph*)}]
\item Let $ABC$ be a non-equilateral spherical triangle satisfying \eqref{eq:domain}. The construction with sign $\eps$ gives an equilateral triangle $XYZ$ if and only if, for some $\kappa\in\{\cos\beta,-\cos\beta\}$,
\[
(x,y,z)\in\cQ_\kappa \text{ and } \eps\kappa(x+y+z-xyz)>0 .
\]
In this case $\kappa=-\sigma\cos\beta$ and $\cos\ell=-\kappa/(1+\kappa)$.
\item Conversely, let $(x,y,z)\in\cQ_\kappa$ with $0<x,y,z<\tan\beta$, not all equal, and $x+y+z\ne xyz$. Then there is a spherical triangle, unique up to isometry, with side cosines $2\cos^2\beta\,(1+x^2)-1$, $2\cos^2\beta\,(1+y^2)-1$, $2\cos^2\beta\,(1+z^2)-1$. It satisfies \eqref{eq:domain}, and its construction with $\eps=\sgn(\kappa(x+y+z-xyz))$ gives an equilateral $XYZ$.
\end{enumerate}
\end{theorem}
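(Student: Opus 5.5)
The plan is to reduce everything to the criterion $\cos\Phi=0$ of Theorem~\ref{thm:angle}, by writing $e^{i\Phi}$ as an explicit expression in $x,y,z$. This uses \eqref{eq:product} and Lemma~\ref{lem:area}. Put $c=\cos\beta$ and $P=(1+ix)(1+iy)(1+iz)=(1-e_2)+i(e_1-e_3)$.

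\textbf{Step 1: everything in terms of $x,y,z$.} By \eqref{eq:x} we have $1+p=2c^2(1+x^2)$ and the analogues for $q,r$. This gives
\[
\alpha=2c^2(3+x^2+y^2+z^2)-2,\qquad g=c^3\sqrt{(1+x^2)(1+y^2)(1+z^2)}=c^3|P|.
\]
By \eqref{eq:theta}, $\sin\theta_a\sin\theta_b\sin\theta_c=c^3/g$. Conjugating \eqref{eq:product} then gives $e^{i\Theta}=-i\,c^3\,\overline{P}/g$. Lemma~\ref{lem:area} gives $e^{i\eps\Omega/2}=(\alpha+i\eps\delta)/(4g)$. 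Multiplying the two, we obtain
\[
\cos\Phi=\frac{c^3}{4g^2}\Bigl(\eps\delta(1-e_2)-\alpha(e_1-e_3)\Bigr),
\]
together with an analogous formula for $\sin\Phi$. Hence $\cos\Phi=0$ if and only if $\eps\delta(1-e_2)=\alpha(e_1-e_3)$.

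\textbf{Step 2: squaring and un-squaring.} By \eqref{eq:gram} and Lemma~\ref{lem:area}, $\delta^2=16g^2-\alpha^2=16c^6|P|^2-\alpha^2$. Squaring the relation from Step~1 and using $|P|^2=(1-e_2)^2+(e_1-e_3)^2$, everything collapses to $\alpha^2=16c^6(1-e_2)^2$. So $\alpha=4\kappa^3(1-e_2)$ for some $\kappa\in\{c,-c\}$. Substituting the expression for $\alpha$, this is exactly $\kappa^2(3+x^2+y^2+z^2)-1=2\kappa^3(1-e_2)$, which rearranges to \eqref{eq:quadric} because $1-3\kappa^2+2\kappa^3=(1-\kappa)^2(1+2\kappa)$.

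For the sign condition, first assume $e_2\ne1$. Inserting $\alpha=4\kappa^3(1-e_2)$ into the unsquared relation and cancelling $1-e_2$ gives $\eps\delta=4\kappa^3(e_1-e_3)$. Since $\delta>0$, this holds if and only if $\eps\kappa(e_1-e_3)>0$; note that $\delta=4c^3|e_1-e_3|$ holds automatically on the quadric. The case $e_2=1$, i.e.\ $\Theta=\pi$, has to be treated separately. There the relation forces $\alpha=0$, so both quadrics $\cQ_{\pm c}$ hold. The condition $\cos\Phi=0$ becomes $\alpha=0$, with $e_1-e_3>0$ by the remark after \eqref{eq:product}, and one takes $\kappa=\eps c$. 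Finally, I would read off $\kappa=-\sigma c$ by evaluating $\sin\Phi$ from the same product formula. The formula $\cos\ell=-\kappa/(1+\kappa)$ then follows from \eqref{eq:side}.

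\textbf{Step 3: the converse (b).} Given $(x,y,z)\in\cQ_\kappa\cap(0,\tan\beta)^3$, define $p,q,r$ by the stated formulas. These lie in $(-1,1)$ since $0<x<\tan\beta$. On the quadric the computation of Step~2 runs backwards, giving
\[
1+2pqr-p^2-q^2-r^2=16c^6|P|^2-\alpha^2=16c^6(e_1-e_3)^2>0 .
\]
So the Gram matrix $G$ of \S\ref{sec:prelim} is positive definite, and a triangle exists, unique up to isometry. It satisfies \eqref{eq:domain}, since $x\in(0,\tan\beta)$ corresponds to $a<2\beta$. Its coordinates, recovered via \eqref{eq:x}, are the given $x,y,z$, and it is non-equilateral because $x,y,z$ are not all equal. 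With $\eps=\sgn(\kappa(e_1-e_3))$, part (a) applies.

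\textbf{Main obstacle.} I expect the hardest part to be the sign bookkeeping: the choice of branch $\kappa$, the degenerate case $e_2=1$, and the identification $\kappa=-\sigma\cos\beta$.
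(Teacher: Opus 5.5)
Your proof is correct and follows the paper's route: reduction to $\cos\Phi=0$, identification of $\alpha=4\kappa^3(1-e_2)$ with $\cQ_\kappa$ via \eqref{eq:alphaQ}, the sign from $\delta=4\cos^3\beta\,|e_1-e_3|$, and for (b) the identity $1+2pqr-p^2-q^2-r^2=16\cos^6\beta\,(e_1-e_3)^2$, which in (b) must come from the polynomial identity \eqref{eq:poly}, since no triangle exists yet to apply Lemma~\ref{lem:area}. The one difference is that the paper needs neither your squaring nor the separate case $e_2=1$: when $\cos\Phi=0$ the number $(\alpha-i\eps\delta)P$ is real while $|\alpha-i\eps\delta|=4\cos^3\beta\,|P|$, so $\alpha-i\eps\delta=4\kappa^3\overline{P}$ at once, with $\kappa=-\sigma\cos\beta$.
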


Note that, among the triples coming from triangles, the diagonal $x=y=z$ corresponds exactly to the equilateral ones, and that on $\cQ_\kappa$ the surface $x+y+z=xyz$ is where the Gram determinant of the side cosines in (b) vanishes (see the proof). For each quadric, the sign of $x+y+z-xyz$ thus determines which construction gives an equilateral triangle; some of the resulting parts are empty in the admissible cube. The side of $XYZ$ is $\arccos(-\cos\beta/(1+\cos\beta))$ when the construction selects $\kappa=\cos\beta$, and $\arccos(\cos\beta/(1-\cos\beta))$ when it selects $\kappa=-\cos\beta$; the latter case requires $\beta>\pi/3$.

\begin{corollary}\label{cor:small}
Let $0<\beta\le\pi/3$, and let $ABC$ be non-equilateral and satisfy \eqref{eq:domain}.
\begin{enumerate}[label=\textup{(\roman*)}]
\item The inward construction never gives an equilateral $XYZ$.
\item The outward construction gives one if and only if $\Theta+\Omega/2=3\pi/2$, i.e.~if and only if $(x,y,z)\in\cQ_{\cos\beta}$. In this case $\cos\ell=-\cos\beta/(1+\cos\beta)$.
\item The quadric $\cQ_{\cos\beta}$ is an ellipsoid, and each of its points in the open positive octant, except the diagonal one, comes from a triangle as in (ii). In particular, these triangles form a two-parameter family.
\end{enumerate}
\end{corollary}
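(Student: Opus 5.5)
The plan is to derive all three parts from Theorems~\ref{thm:angle} and~\ref{thm:quadric}, together with the bounds \eqref{eq:theta-bound} and $0<\Omega<2\pi$.

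For (i), suppose the inward construction gives an equilateral $XYZ$. By Theorem~\ref{thm:angle} we have $\cos(\Theta-\Omega/2)=0$. If $\sigma=1$, the same theorem forces $\beta>\pi/3$, which is excluded. So $\sigma=-1$, and by \eqref{eq:branches} this means $\Theta-\Omega/2=-\pi/2$, i.e.\ $\Theta=\Omega/2-\pi/2$. Since $\Omega/2<\pi$, this gives $\Theta<\pi/2$. On the other hand, \eqref{eq:theta-bound} with $\beta\le\pi/3$ gives $\Theta>3\pi/2-3\beta\ge\pi/2$, a contradiction.

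For (ii), the outward branch $\Theta+\Omega/2=\pi/2$ corresponds to $\sigma=1$. It is excluded for the same reason, since it requires $\beta>\pi/3$. So by Theorem~\ref{thm:angle} the only possibility is $\Theta+\Omega/2=3\pi/2$ with $\sigma=-1$, and then \eqref{eq:side} gives $\cos\ell=-\cos\beta/(1+\cos\beta)$.

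To translate this into the quadric, I would apply Theorem~\ref{thm:quadric}(a), which gives $\kappa=-\sigma\cos\beta=\cos\beta$. For the converse I need: if $(x,y,z)\in\cQ_{\cos\beta}$, then the sign condition $x+y+z-xyz>0$ holds automatically for triples coming from triangles. Once that sign is known, Theorem~\ref{thm:quadric}(a) with $\eps=1$ applies. Establishing the sign is the main point, and I would argue as follows.
\begin{itemize}
\item If instead $e_1-e_3<0$, Theorem~\ref{thm:quadric}(a) would make the inward construction ($\eps=-1$) equilateral, contradicting (i).
\item The case $e_1=e_3$ is excluded because, by the remark after Theorem~\ref{thm:quadric}, on $\cQ_\kappa$ it corresponds to a vanishing Gram determinant, which cannot happen for a genuine triangle.
\end{itemize}

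For (iii), first check that $\cQ_{\cos\beta}$ is an ellipsoid. With $k=\cos\beta\in[1/2,1)$, the quadratic form is $k^2\bigl(\sum x^2+2k\sum xy\bigr)$. Its eigenvalues are $k^2(1+4k)$ on the diagonal direction and $k^2(1-k)$ (twice) on the orthogonal plane, all positive. The right-hand side $(1-k)^2(1+2k)$ is also positive, so the quadric is an ellipsoid. The diagonal point of the positive octant exists, but it is excluded since it corresponds to equilateral triangles.

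Next, take a point of the ellipsoid in the open positive octant, off the diagonal. To apply Theorem~\ref{thm:quadric}(b) I must verify three things.
\begin{enumerate}
\item The coordinates satisfy $x,y,z<\tan\beta$. On the ellipsoid with nonnegative coordinates, $k^2x^2\le(1-k)^2(1+2k)$. It then suffices to check $(1-k)^2(1+2k)<1-k^2$, i.e.\ $(1-k)(1+2k)<1+k$, i.e.\ $k(1-2k)<0$. This holds for $k>1/2$. For $k=1/2$ it is an equality, so I would need the strict positivity of the cross terms $2k^3e_2>0$ to get strictness.
\item The sign $\sgn(\kappa(e_1-e_3))$ is positive. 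This is the main obstacle. The argument in (ii) only applies once a triangle exists, so I cannot reuse it directly. My first attempt is to show $e_1-e_3$ cannot vanish on the connected region (ellipsoid $\cap$ open octant, minus the diagonal point), and then evaluate the sign at one point. Vanishing would mean the Gram determinant is zero. To rule that out, I would substitute $e_3=e_1$ into the quadric and derive a contradiction with the octant bounds, possibly using $e_2\ge\dots$-type inequalities. Failing that, I would use continuity from a nearby nondegenerate triangle together with (i).
\item Given positivity, Theorem~\ref{thm:quadric}(b) yields a triangle whose outward construction is equilateral. Distinct points give non-congruent triangles, since $x,y,z$ determine the sides. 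The region is a two-dimensional surface patch, so this gives the two-parameter family.
\end{enumerate}
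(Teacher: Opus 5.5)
\textbf{Gap: the sign of $e_1-e_3$ in (iii).} Parts (i) and (ii) are fine. Your (i) is the paper's argument. Your converse in (ii) is valid: a point of $\cQ_{\cos\beta}$ with $e_1<e_3$ would make the inward construction equilateral, contradicting (i), and $e_1=e_3$ would force $\delta=0$. It is roundabout, though, since \eqref{eq:theta-bound} and \eqref{eq:product} give $e_1>e_3$ for every admissible triangle at once.

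The gap is the step you yourself flag as the main obstacle: you never show that $e_1-e_3>0$ at every non-diagonal point of $\cQ_{\cos\beta}$ in the open octant. Your first strategy (substitute $e_3=e_1$ into the quadric) is only sketched. The fallback does not close it either. Combining Theorem~\ref{thm:quadric}(b) with (i) does rule out points with $e_1<e_3$, because a triangle exists there and its inward construction would be equilateral. But at points with $e_1=e_3$ no triangle exists. Continuity then only tells you that $e_1-e_3\ge0$ on the region, and a nonnegative function may still have zeros. So the claim that \emph{every} such point comes from a triangle is not established.

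\textbf{The missing observation.} The identity \eqref{eq:product} is purely algebraic. For any $x,y,z>0$, put $\theta_a=\operatorname{arccot}x\in(0,\pi/2)$ and similarly $\theta_b,\theta_c$. Then $e_1-e_3=-\cos\Theta/S$ with $S>0$, and no triangle is needed. Your step~1 places the point in the cube $(0,\tan\beta)^3$, and $\tan\beta\le\sqrt3$ when $\beta\le\pi/3$. Hence each angle exceeds $\pi/6$, so $\pi/2<\Theta<3\pi/2$, $\cos\Theta<0$, and $e_1>e_3$. This gives both the non-vanishing and the sign in one line, and Theorem~\ref{thm:quadric}(b) then applies with $\eps=1$. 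This is exactly how the paper argues.

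\textbf{Two slips.}
\begin{itemize}
\item The eigenvalue of the quadratic form in the direction $(1,1,1)$ is $k^2(1+2k)$, not $k^2(1+4k)$.
\item The inequality $(1-k)(1+2k)<1+k$ reduces to $2k^2>0$, not to $k(1-2k)<0$. So $x<\tan\beta$ holds strictly for all $k$, including $k=1/2$, and no special treatment of that case is needed.
\end{itemize}
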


For $\beta=\pi/3$ we have $x=\sqrt{1+2\cos a}$, the ellipsoid becomes $x^2+y^2+z^2+xy+yz+zx=2$, and $\cos\ell=-1/3$. This is the classification of \cite{DNV24}.

\begin{corollary}\label{cor:large}
Let $\pi/3<\beta<\pi/2$. Within the cube $(0,\tan\beta)^3$, consider the four sets
\[
\cQ_{\cos\beta}\cap\{e_1>e_3\},\ \cQ_{-\cos\beta}\cap\{e_1<e_3\},\ \cQ_{-\cos\beta}\cap\{e_1>e_3\},\ \cQ_{\cos\beta}\cap\{e_1<e_3\},
\]
which correspond to the outward, outward, inward and inward construction respectively. The first three contain a two-dimensional set of non-diagonal points, and the last one contains non-diagonal points if and only if $\cos\beta<1/4$. In particular, the outward construction gives equilateral triangles of two different sizes, and so does the inward one when $\cos\beta<1/4$.
\end{corollary}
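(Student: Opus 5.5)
The plan is to reduce everything to Theorem~\ref{thm:quadric}(b), which turns each of the four sets into a set of admissible input triangles. By that theorem, a point of $\cQ_\kappa\cap(0,\tan\beta)^3$ that is off the diagonal and off $\{e_1=e_3\}$ comes from a triangle whose construction with $\eps=\sgn(\kappa(e_1-e_3))$ gives an equilateral $XYZ$. This yields the stated assignment of the four sets to the outward/inward constructions. So the task is to exhibit points of each quadric in the open cube with the prescribed strict sign of $e_1-e_3$. Since both conditions are open, it suffices to find one such point at which $\cQ_\kappa$ is a smooth surface. The implicit function theorem then gives a two-parameter family of nearby points, and these can be taken off the diagonal. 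Write $k=\cos\beta\in(0,1/2)$ and $t=\tan\beta=\sqrt{1-k^2}/k$. Using $x^2+y^2+z^2=e_1^2-2e_2$, I rewrite \eqref{eq:quadric} as
\[
\kappa^2e_1^2-2\kappa^2(1-\kappa)e_2=(1-\kappa)^2(1+2\kappa).
\]

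\emph{Points near an axis.} On $\cQ_\kappa$, the point $(x_0,0,0)$ has $x_0=(1-\kappa)\sqrt{1+2\kappa}/k$; this is real since $1+2\kappa>0$ when $k<1/2$. For $\kappa=k$, the inequality $x_0<t$ reduces to $(1-k)(1+2k)<1+k$, i.e.\ $-2k^2<0$. For $\kappa=-k$, it reduces to $(1+k)(1-2k)<1-k$, i.e.\ again $-2k^2<0$. At this point $\partial_x$ of the defining function equals $2\kappa^2x_0\ne0$. Hence the quadric is locally a graph $x=x(y,z)$ for small $y,z>0$, and these points lie in the open cube. Moreover $e_1-e_3\to x_0>0$, and the points are non-diagonal. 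This gives a two-dimensional set in $\cQ_k\cap\{e_1>e_3\}$ (first set) and in $\cQ_{-k}\cap\{e_1>e_3\}$ (third set), for every $\beta\in(\pi/3,\pi/2)$.

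\emph{Diagonal points.} The diagonal point of $\cQ_\kappa$ is $s=(1-\kappa)/(\sqrt3\,k)$, and there $e_1-e_3=s(3-s^2)$. For $\kappa=-k$ we get $s>\sqrt3$ if and only if $1+k>3k$, i.e.\ $k<1/2$, so $e_1<e_3$. Also $s<t$ if and only if $(1+k)^2<3(1-k^2)$, which is again $k<1/2$. The gradient there is a nonzero multiple of $(1,1,1)$, so perturbing along the surface gives a two-dimensional set of non-diagonal points of $\cQ_{-k}\cap\{e_1<e_3\}$ (second set). For $\kappa=k$, the point $s$ always lies in the cube, and it satisfies $e_1<e_3$ exactly when $1-k>3k$, i.e.\ $k<1/4$. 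The same perturbation then shows that the fourth set has non-diagonal points when $\cos\beta<1/4$.

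\emph{The converse threshold}, which I expect to be the only genuinely non-routine step, is showing that the fourth set is empty when $k\ge1/4$. Suppose $(x,y,z)\in\cQ_k$ is positive with $e_1<e_3$. The inequality $e_1e_2\ge9e_3$ gives $e_1<e_3\le e_1e_2/9$, hence $e_2>9$. On the other hand, $e_1^2\ge3e_2$ together with the rewritten quadric gives
\[
(1-k)^2(1+2k)\ge k^2e_2\bigl(3-2(1-k)\bigr)=k^2e_2(1+2k),
\]
so $e_2\le(1-k)^2/k^2$. Combining the two bounds gives $9<(1-k)^2/k^2$, i.e.\ $3k<1-k$, so $k<1/4$. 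Finally, by Theorem~\ref{thm:quadric}(a), the first and fourth sets yield $\cos\ell=-k/(1+k)$, while the second and third yield $\cos\ell=k/(1-k)$. This gives the two sizes in each construction as claimed.
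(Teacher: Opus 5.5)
Your proposal is correct and follows the paper's proof essentially step for step. The steps match: points near an axis for the first and third sets, the positive diagonal points of $\cQ_{-\cos\beta}$ and (when $\cos\beta<1/4$) of $\cQ_{\cos\beta}$ for the second and fourth, and the threshold via $e_1e_2\ge9e_3$ combined with $e_2\le(1-\cos\beta)^2/\cos^2\beta$. The only differences are cosmetic: you use the implicit function theorem where the paper writes the local graph \eqref{eq:zxy} explicitly, and you phrase the threshold argument by contradiction.
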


\begin{remark}\label{rem:both}
A non-equilateral $ABC$ gives equilateral triangles of centres in both constructions if and only if $\Theta=\Omega=\pi$. Indeed, by \eqref{eq:branches}, the outward and inward values of $\Phi$ lie in $\{\pi/2,3\pi/2\}$ and $\{-\pi/2,\pi/2\}$ respectively. Their difference is $\Omega\in(0,2\pi)$ and their sum is $2\Theta>0$, so that they must be $3\pi/2$ and $\pi/2$; the converse follows from Theorem~\ref{thm:angle}. By Lemma~\ref{lem:area} and \eqref{eq:product}, this condition is equivalent to $\alpha=0$ and $e_2=1$. To express it on the quadrics, put $c_\beta=\cos\beta$. The algebraic identity \eqref{eq:alphaQ}, proved independently below, gives
\[
2Q_{c_\beta}=\alpha-4c_\beta^3(1-e_2),\ \ 2Q_{-c_\beta}=\alpha+4c_\beta^3(1-e_2),
\]
so that the condition is also equivalent to $(x,y,z)\in\cQ_{\cos\beta}\cap\cQ_{-\cos\beta}$. Since the inward value of $\Phi$ gives $\sigma=1$, this forces $\beta>\pi/3$. Example~\ref{ex:both} provides an explicit instance.
\end{remark}

\begin{proposition}\label{prop:DEF}
Let $ABC$ be non-equilateral and satisfy \eqref{eq:domain}. Assume that the chosen construction gives an equilateral triangle $XYZ$ whose side $\ell$ is greater than $\pi/2$. Then $DEF$ is a nondegenerate equilateral triangle, whose side $m$ satisfies
\begin{equation}\label{eq:DEF}
\cos m=\frac{\cos\beta\,(1-\cos\beta+e_2\cos\beta)}{1-\cos\beta}
\end{equation}
and $0<m<\beta$.
\end{proposition}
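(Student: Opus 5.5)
The plan is to compute the inner products $\ip{D}{E}$, $\ip{E}{F}$, $\ip{F}{D}$ directly from Lemma~\ref{lem:construction}(iii), feeding in the information provided by Theorems~\ref{thm:angle} and~\ref{thm:quadric}. Write $k=\cos\beta$. The hypothesis $\ell>\pi/2$ means $\cos\ell<0$, so by \eqref{eq:side} we have $\sigma=-1$. By Theorem~\ref{thm:quadric}(a) we then have $\kappa=k$, $(x,y,z)\in\cQ_{k}$ and $\cos\ell=\ip{X}{Y}=\ip{Y}{Z}=\ip{Z}{X}=-k/(1+k)$.

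Put $D=\lambda_a X-\tfrac12\sec\beta\,(B+C)$ with $\lambda_a=(1+\sec\beta)\,x\cot\beta$, and define $\lambda_b,\lambda_c$ for $E,F$ analogously. Expanding $\ip{D}{E}$ produces four kinds of terms.
\begin{itemize}[nosep]
\item The term $\lambda_a\lambda_b\ip{X}{Y}$, where $\ip{X}{Y}$ is the known constant $\cos\ell$.
\item The cross terms $\ip{X}{C}$, $\ip{Y}{C}$, which equal $x\cot\beta$ and $y\cot\beta$ by Lemma~\ref{lem:construction}(i).
\item The cross terms $\ip{X}{A}$ and $\ip{Y}{B}$. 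These follow from \eqref{eq:X}: for instance $\ip{X}{A}=(x(q+r)-\eps\delta)/(\tan\beta\,(1+p))$.
\item The term $\ip{B+C}{C+A}=1+p+q+r$, which is the quantity $\alpha$.
\end{itemize}
I then eliminate $p,q,r$ through $1+p=2k^2(1+x^2)$ from \eqref{eq:x}. I eliminate $\alpha$ and $\delta$ through Lemma~\ref{lem:area} combined with \eqref{eq:product}. On our branch the condition $\Theta+\eps\Omega/2=3\pi/2$ makes $4g\,e^{i\eps\Omega/2}$ proportional to $(1+ix)(1+iy)(1+iz)$ up to a real factor fixed by $g$. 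This expresses $\eps\delta$ and $\alpha$ as polynomials in $x,y,z$ (the identity \eqref{eq:alphaQ}). Note that $\eps$ then enters only through the product $\eps\delta$, which is determined by the branch, so the result should not depend on the direction of the construction.

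After these substitutions, $\ip{D}{E}$ becomes a rational function of $x,y,z$. The main step, and the expected obstacle, is to show that, modulo the quadric equation $Q_k=0$, this function reduces to the symmetric expression
\[
\frac{k(1-k+e_2k)}{1-k}.
\]
I would first try to factor the difference $\ip{D}{E}-\ip{E}{F}$ as $(x-z)$ times a factor. I expect this factor to be a combination of $Q_k$ and $\cos\ell+k/(1+k)$, mirroring Lemma~\ref{lem:difference}. This is exactly where the hypothesis $\sigma=-1$ enters: on the other branch the corresponding factor would not vanish, which is consistent with Remark~\ref{rem:DEF}. Once the three inner products are known to be equal, evaluating one of them on $\cQ_k$ gives \eqref{eq:DEF}.

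For the bounds, note first that $e_2>0$, so $\cos m>k=\cos\beta$ and hence $m<\beta$. For $\cos m<1$, the inequality to prove is equivalent to $e_2<(1-k)^2/k^2$. On $\cQ_k$ we have
\[
k^2(x^2+y^2+z^2)+2k^3e_2=(1-k)^2(1+2k),
\]
and $x^2+y^2+z^2\ge e_2$, with equality only on the diagonal. Since $ABC$ is non-equilateral, this gives $k^2(1+2k)e_2<(1-k)^2(1+2k)$, which is the required bound. Hence $0<m<\beta<\pi/2$.

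Finally, nondegeneracy. Three distinct points at equal mutual distance $m\in(0,\pi/2)$ cannot lie on one great circle, since on a circle either two of the gaps sum to the third or all three sum to $2\pi$, and $3m<2\pi$. Therefore $D,E,F$ are linearly independent and $DEF$ is a nondegenerate equilateral triangle.
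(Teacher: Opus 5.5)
Your route is the paper's. You expand $\ip{D}{E}$ via Lemma~\ref{lem:construction}(iii), insert $\ip{X}{Y}=-k/(1+k)$ and the branch relations $\alpha=4k^3(1-e_2)$, $\eps\delta=4k^3(e_1-e_3)$ (these are \eqref{eq:alpha-delta}, not \eqref{eq:alphaQ}), and bound $\cos m$ using \eqref{eq:e2-bound}. Your bounds are correct. Your nondegeneracy argument is a valid substitute for the paper's Gram eigenvalues $1-\rho_D,1-\rho_D,1+2\rho_D>0$: equidistant points on a great circle would force $m=0$ or $m=2\pi/3$. However, the step that carries the whole proposition, the symmetry of $\ip{D}{E}$, is only announced as an expected obstacle together with a conjectured factorization. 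As written, it is not proved.

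The step closes in two lines and needs neither a factorization nor a reduction modulo $Q_k$. Group your two cross terms as $\ip{X}{C+A}=(x\alpha-\eps\delta)/(\tan\beta\,(1+p))$. Then use the identity $x(1-e_2)-(e_1-e_3)=-(1+x^2)(y+z)$ together with $1+p=2k^2(1+x^2)$. This gives $\ip{X}{C+A}=-2k(y+z)\cot\beta$, and likewise $\ip{Y}{B+C}=-2k(x+z)\cot\beta$. With $\sec\beta=1/k$ and $\cot^2\beta=k^2/(1-k^2)$, the expansion collapses to
\[
\ip{D}{E}=\frac{k}{1-k}\bigl(-xy+x(y+z)+y(x+z)\bigr)+k(1-e_2)=k+\frac{k^2e_2}{1-k},
\]
which is manifestly symmetric and equals \eqref{eq:DEF}.

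Your intuition about where $\sigma=-1$ enters is right. If you keep the common value $\rho$ of the $\ip{X}{Y}$'s as an unknown, the same substitutions give $\ip{D}{E}-\ip{E}{F}=\frac{y(x-z)}{1-k}\bigl((1+k)\rho+k\bigr)$, with no $Q_k$ term. One small slip: for the inward construction, $\sigma=-1$ means $\Theta-\Omega/2=-\pi/2$, not $3\pi/2$. This is harmless, since only $e^{i\Phi}$ enters.
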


\begin{remark}\label{rem:DEF}
\begin{enumerate}[label=(\arabic*)]
\item The hypothesis that $ABC$ is not equilateral cannot be omitted. Take $\beta=\pi/3$ and $\cos a=\cos b=\cos c=-1/3$, so that $ABC$ is a face of a regular tetrahedron. Then $x=y=z=1/\sqrt3$ lies on $\cQ_{1/2}$, and the outward construction gives $X=-A$, $Y=-B$, $Z=-C$. The triangle $XYZ$ is equilateral with side greater than $\pi/2$, but $D=E=F=-(A+B+C)$ and $m=0$.
\item The converse fails, even for a non-equilateral input. For $\beta=\pi/3$, take $\cos a=\cos b=-1/5$, $\cos c=7/10$, and construct outwards. This is a valid triangle with $(x,y,z)=\sqrt{3/5}\,(1,1,2)$. We find $\ip{X}{Y}=-1/2$ and $\ip{Y}{Z}=\ip{Z}{X}=0$, so that the sides of $XYZ$ are $120^\circ,90^\circ,90^\circ$. Nevertheless, $DEF$ is equilateral, with $\ip{D}{E}=\ip{E}{F}=\ip{F}{D}=-1/5$.
\item By Theorem~\ref{thm:angle}, the condition $\ell>\pi/2$ is equivalent to $\sigma=-1$, i.e.~the chosen construction selects $\kappa=\cos\beta$ in Theorem~\ref{thm:quadric}. We stress that mere membership in $\cQ_{\cos\beta}$ would not suffice. In Example~\ref{ex:both} the parameter point lies on both quadrics, but the inward construction has $\ell<\pi/2$ and a non-equilateral $DEF$; Example~\ref{ex:second} gives another equilateral $XYZ$ with $\ell<\pi/2$ and non-equilateral $DEF$. Note also that, contrary to $\ell$, the side $m$ varies within each family covered by the proposition.
\end{enumerate}
\end{remark}

\section{Proof of Theorem~\ref{thm:angle}}\label{sec:proof1}

\begin{lemma}\label{lem:inner}
Let $N_{xy}=\alpha xy-\eps\delta(x+y)+pq-r$, and define $N_{yz}$, $N_{zx}$ cyclically; for instance $N_{zx}=\alpha zx-\eps\delta(z+x)+rp-q$. Then
\begin{equation}\label{eq:XY}
\ip{X}{Y}=\frac{N_{xy}}{\tan^2\beta\,(1+p)(1+q)}=\frac{N_{xy}}{4\sin^2\beta\cos^2\beta\,(1+x^2)(1+y^2)},
\end{equation}
and similarly for the other pairs.
\end{lemma}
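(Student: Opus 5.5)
The plan is to substitute formula \eqref{eq:X} of Lemma~\ref{lem:construction} for $X$, and its cyclic analogue
\[
Y=\frac{y(C+A)-\eps\,C\times A}{\tan\beta\,(1+q)}
\]
for $Y$, and then expand the inner product bilinearly. The denominator is then $\tan^2\beta\,(1+p)(1+q)$ at once. By \eqref{eq:x}, $1+p=2\cos^2\beta(1+x^2)$ and similarly for $q$, so the two denominators in \eqref{eq:XY} agree. The remaining task is to show that the numerator equals $N_{xy}$.

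The numerator splits into four terms:
\[
xy\ip{B+C}{C+A}-\eps x\ip{B+C}{C\times A}-\eps y\ip{B\times C}{C+A}+\ip{B\times C}{C\times A}.
\]
We evaluate each in turn.
\begin{enumerate}[label=(\arabic*)]
\item The first term: $\ip{B+C}{C+A}=r+p+1+q=\alpha$, which gives $\alpha xy$.
\item The second term: $\ip{C}{C\times A}=0$ and $\ip{B}{C\times A}=[B,C,A]=\delta$, so it contributes $-\eps\delta x$.
\item The third term: $\ip{B\times C}{C}=0$ and $\ip{B\times C}{A}=\delta$, so it contributes $-\eps\delta y$.
\item The last term: by the Binet--Cauchy identity,
\[
\ip{B\times C}{C\times A}=\ip{B}{C}\ip{C}{A}-\ip{B}{A}\ip{C}{C}=pq-r.
\]
\end{enumerate}
Summing gives exactly $\alpha xy-\eps\delta(x+y)+pq-r=N_{xy}$. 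The other pairs follow by cyclic permutation $A\to B\to C\to A$ together with $x\to y\to z$ and $p\to q\to r$. Under this permutation $\alpha$ and $\delta=[A,B,C]$ are invariant.

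The only delicate point is bookkeeping of orientation. We must check that the cyclic analogue of $B\times C$ for the side $CA$ is indeed $C\times A$, not $A\times C$, so that the same sign $\eps$ describes the outward choice. The inner side of $CA$ is $\{P:\ip{P}{C\times A}>0\}$, which contains $B$ since $[B,C,A]=\delta>0$. This matches the convention used in the proof of Lemma~\ref{lem:construction}, so the sign pattern is consistent. Beyond this check, the argument is a routine expansion with no real obstacle.
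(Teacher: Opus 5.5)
Your proposal is correct and matches the paper's proof: expand the inner product of the numerator of \eqref{eq:X} with its cyclic analogue using $\ip{B+C}{C+A}=\alpha$, $\ip{B+C}{C\times A}=\ip{B\times C}{C+A}=\delta$ and $\ip{B\times C}{C\times A}=pq-r$, then rewrite the denominator via \eqref{eq:x}. Your orientation check, that $C\times A$ is the correct cyclic analogue of $B\times C$, is a harmless extra confirmation of the convention the paper uses implicitly.
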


\begin{proof}
We take the inner product of the numerator in \eqref{eq:X} with its cyclic counterpart. The four terms are given by $\ip{B+C}{C+A}=\alpha$, $\ip{B+C}{C\times A}=\ip{B\times C}{C+A}=\delta$ and $\ip{B\times C}{C\times A}=pq-r$, which gives the first expression in \eqref{eq:XY}; the second one follows from \eqref{eq:x}.
\end{proof}

\begin{lemma}\label{lem:difference}
Let $J=\alpha(e_1-e_3)-\eps\delta(1-e_2)$. Then
\[
(1+z^2)N_{xy}-(1+y^2)N_{zx}=(y-z)J,
\]
and the same holds after cyclically permuting $x,y,z$ together with $a,b,c$.
\end{lemma}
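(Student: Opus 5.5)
The identity is a direct algebraic verification. I will expand both sides in terms of $x,y,z$ and the quantities $p,q,r,\alpha,\delta$. The only geometric input is the relation \eqref{eq:x}, which reads $1+p=2\cos^2\beta\,(1+x^2)$ and similarly $1+q=2\cos^2\beta\,(1+y^2)$, $1+r=2\cos^2\beta\,(1+z^2)$. In particular $q-r=2\cos^2\beta\,(y^2-z^2)$. The quantities $\alpha$ and $\delta$ enter linearly, so I group the left-hand side into its $\alpha$-part, its $\delta$-part and its constant part, and factor out $y-z$ from each.

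First, the $\alpha$-terms give
\[
\alpha\bigl[(1+z^2)xy-(1+y^2)zx\bigr]=\alpha x(y-z)(1-yz).
\]
Second, the $\delta$-terms give
\[
-\eps\delta\bigl[(1+z^2)(x+y)-(1+y^2)(z+x)\bigr].
\]
Expanding the bracket yields $x(z^2-y^2)+(y-z)-yz(y-z)$, which equals $(y-z)\bigl(1-x(y+z)-yz\bigr)=(y-z)(1-e_2)$.

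Third, and this is the only step needing care, come the constant terms $(1+z^2)(pq-r)-(1+y^2)(rp-q)$. Here I replace $1+z^2$ and $1+y^2$ by $(1+r)/(2\cos^2\beta)$ and $(1+q)/(2\cos^2\beta)$. The numerator then becomes
\[
(1+r)(pq-r)-(1+q)(rp-q).
\]
In it the $pqr$ terms cancel, and what remains is $p(q-r)+(q-r)+(q^2-r^2)=(q-r)\alpha$. Using $q-r=2\cos^2\beta\,(y^2-z^2)$, the constant part equals $\alpha(y-z)(y+z)$.

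Adding the three parts gives
\[
(y-z)\bigl[\alpha(x-xyz+y+z)-\eps\delta(1-e_2)\bigr]=(y-z)J,
\]
as claimed. The cyclic versions follow by the same computation with $(x,y,z)$ and $(a,b,c)$ permuted together. This works because $J$ is symmetric in $x,y,z$, and $\alpha$ and $\delta$ are invariant under cyclic permutation; the ordering convention \eqref{eq:delta} is preserved by cyclic relabelling. No step is a real obstacle. The only point to watch is using \eqref{eq:x} to convert the constant terms, rather than trying to treat the identity as a polynomial identity in independent variables, which it is not.
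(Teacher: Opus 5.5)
Your proof is correct and follows the paper's argument essentially verbatim. You use the same split into $\alpha$-, $\delta$- and remaining terms, and you handle the remaining terms the same way, via $1+r=2\cos^2\beta\,(1+z^2)$ and the identity $(1+r)(pq-r)-(1+q)(rp-q)=(q-r)\alpha$; your brief justification of the cyclic versions (symmetry of $J$, cyclic invariance of $\alpha$ and $\delta=[A,B,C]$) is a small welcome addition.
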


\begin{proof}
We compare the terms of $N_{xy}$ and $N_{zx}$. The terms involving $\alpha$ give $xy(1+z^2)-xz(1+y^2)=x(y-z)(1-yz)$, and those involving $\delta$ give $(x+y)(1+z^2)-(x+z)(1+y^2)=(y-z)(1-e_2)$. For the remaining terms, recall from \eqref{eq:x} that $1+z^2=(1+r)/(2\cos^2\beta)$ and $1+y^2=(1+q)/(2\cos^2\beta)$, and note the identity
\[
(1+r)(pq-r)-(1+q)(rp-q)=(q-r)\alpha .
\]
Since $q-r=2\cos^2\beta\,(y^2-z^2)$, these terms contribute $\alpha(y-z)(y+z)$. The total is $(y-z)[\alpha(x(1-yz)+y+z)-\eps\delta(1-e_2)]=(y-z)J$.
\end{proof}

\begin{lemma}\label{lem:J}
Let $S=\sin\theta_a\sin\theta_b\sin\theta_c$ and $\Phi=\Theta+\eps\Omega/2$. Then
\[
(\alpha-i\eps\delta)\bigl((1-e_2)+i(e_1-e_3)\bigr)=-\frac{4g}{S}(\sin\Phi+i\cos\Phi).
\]
In particular, $J=-(4g/S)\cos\Phi$.
\end{lemma}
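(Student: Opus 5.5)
The plan is to multiply two complex identities that are already available and then take the imaginary part.

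First, conjugating Lemma~\ref{lem:area} when $\eps=-1$, or using it directly when $\eps=1$, gives
\[
\alpha-i\eps\delta=4g\,e^{-i\eps\Omega/2}.
\]
This is valid because $\alpha,\delta,g,\Omega$ are real, and $\delta>0$ by the orientation convention \eqref{eq:delta}.

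Second, identity \eqref{eq:product} reads
\[
(1-e_2)+i(e_1-e_3)=-\frac{\sin\Theta+i\cos\Theta}{S}=-\frac{i\,e^{-i\Theta}}{S},
\]
since $\sin\Theta+i\cos\Theta=i(\cos\Theta-i\sin\Theta)=ie^{-i\Theta}$. This identity itself comes from $1+ix=i e^{-i\theta_a}/\sin\theta_a$, which follows from $\cot\theta_a=x$ in \eqref{eq:theta}. Indeed, $1+i\cot\theta=(\sin\theta+i\cos\theta)/\sin\theta=ie^{-i\theta}/\sin\theta$.

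Multiplying the two displays gives
\[
(\alpha-i\eps\delta)\bigl((1-e_2)+i(e_1-e_3)\bigr)=-\frac{4g}{S}\,i\,e^{-i(\Theta+\eps\Omega/2)}=-\frac{4g}{S}\,i\,e^{-i\Phi}.
\]
Since $ie^{-i\Phi}=\sin\Phi+i\cos\Phi$, this is exactly the claimed identity.

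For the consequence, I expand the left-hand side and take its imaginary part. That part is $\alpha(e_1-e_3)-\eps\delta(1-e_2)$, which is precisely $J$ as defined in Lemma~\ref{lem:difference}. The imaginary part of the right-hand side is $-(4g/S)\cos\Phi$, so $J=-(4g/S)\cos\Phi$.

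No step is a real obstacle. The only care needed is sign bookkeeping: checking that the conjugation produces $e^{-i\eps\Omega/2}$ rather than $e^{+i\eps\Omega/2}$, and that the factor $i$ is placed correctly so that the real and imaginary parts come out as $\sin\Phi$ and $\cos\Phi$. I also note that $g>0$ and $S>0$, since all half-sides lie in $(0,\pi/2)$ and all base angles are acute. Hence $-4g/S$ is a negative constant, so $J$ vanishes exactly when $\cos\Phi$ does.
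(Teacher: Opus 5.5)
Your proposal is correct and follows the paper's proof: write $\alpha-i\eps\delta=4g\,e^{-i\eps\Omega/2}$ and $(1-e_2)+i(e_1-e_3)=-i\,e^{-i\Theta}/S$, multiply, and take imaginary parts. One small slip: you have the two cases swapped, since it is $\eps=1$ that requires conjugating Lemma~\ref{lem:area} (to get $\alpha-i\delta=4g\,e^{-i\Omega/2}$) and $\eps=-1$ that is the lemma as stated, but the formula you display is nonetheless right.
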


\begin{proof}
By Lemma~\ref{lem:area} we have $\alpha-i\eps\delta=4g\,e^{-i\eps\Omega/2}$, and by \eqref{eq:product} we have $(1-e_2)+i(e_1-e_3)=-i\,e^{-i\Theta}/S$. Their product is $-i(4g/S)e^{-i\Phi}$, whose imaginary part is $J$.
\end{proof}

\begin{lemma}\label{lem:identities}
For $\kappa\in\{\cos\beta,-\cos\beta\}$, let $Q_\kappa=\kappa^2(x^2+y^2+z^2)+2\kappa^3e_2-(1-\kappa)^2(1+2\kappa)$. Then
\begin{equation}\label{eq:alphaQ}
\alpha-4\kappa^3(1-e_2)=2Q_\kappa .
\end{equation}
If moreover $\eps\delta=4\kappa^3(e_1-e_3)$, then
\begin{equation}\label{eq:NQ}
N_{xy}+4\kappa^3(1-\kappa)(1+x^2)(1+y^2)=2(xy-1)Q_\kappa .
\end{equation}
\end{lemma}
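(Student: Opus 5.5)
Both identities are polynomial identities in $x,y,z$ and $\kappa$. The only input I will use is \eqref{eq:x}, in the form $1+p=2\kappa^2(1+x^2)$, $1+q=2\kappa^2(1+y^2)$, $1+r=2\kappa^2(1+z^2)$. This form is valid for both choices of $\kappa$, since only $\kappa^2=\cos^2\beta$ enters. The plan is to substitute these expressions and compare coefficients, using the hypothesis $\eps\delta=4\kappa^3(e_1-e_3)$ for the second identity to eliminate $\delta$.

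For \eqref{eq:alphaQ}, summing the three relations gives $\alpha=-2+2\kappa^2(3+x^2+y^2+z^2)$. Next I expand $(1-\kappa)^2(1+2\kappa)=1-3\kappa^2+2\kappa^3$, so that
\[
2Q_\kappa=2\kappa^2(x^2+y^2+z^2)+4\kappa^3e_2-2+6\kappa^2-4\kappa^3 .
\]
This is visibly $\alpha-4\kappa^3(1-e_2)$, which proves \eqref{eq:alphaQ}.

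For \eqref{eq:NQ}, I write $\alpha xy=2xyQ_\kappa+4\kappa^3xy(1-e_2)$ using \eqref{eq:alphaQ}, and replace $\eps\delta$ by $4\kappa^3(e_1-e_3)$. The claim then becomes
\[
4\kappa^3\bigl[xy(1-e_2)-(x+y)(e_1-e_3)\bigr]+pq-r+4\kappa^3(1-\kappa)(1+x^2)(1+y^2)=-2Q_\kappa .
\]
The bracket simplifies, because the terms $\pm xyz(x+y)$ cancel:
\[
xy(1-e_2)-(x+y)(e_1-e_3)=xy-x^2y^2-(x+y)^2-z(x+y).
\]
Now I isolate the $z$-dependence. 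On the left, $-4\kappa^3z(x+y)$ comes from the bracket and $-2\kappa^2z^2$ comes from $-r=1-2\kappa^2(1+z^2)$. On the right, $-2Q_\kappa$ contributes exactly $-2\kappa^2z^2-4\kappa^3z(x+y)$. So these terms cancel, and what remains is an identity in $x,y,\kappa$ alone.

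For that remaining identity I substitute $pq=(2\kappa^2(1+x^2)-1)(2\kappa^2(1+y^2)-1)$ and compare the coefficients of the monomials $1$, $x^2+y^2$, $xy$ and $x^2y^2$ on both sides. The identity is symmetric in $x,y$, and each coefficient is a polynomial in $\kappa$ of degree at most $4$. The key check is the $x^2y^2$ coefficient: it is $-4\kappa^3+4\kappa^4+4\kappa^3(1-\kappa)=0$ on the left, matching $0$ on the right. The main (though routine) obstacle is carrying out this last bookkeeping without error. Alternatively, one can verify it at a few values of $(x,y)$ as polynomials in $\kappa$, since the remaining identity has bounded degree in each variable.
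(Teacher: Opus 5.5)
Your proposal is correct and follows the paper's proof essentially step for step. Both arguments substitute $\alpha$ via \eqref{eq:alphaQ} and $\eps\delta=4\kappa^3(e_1-e_3)$, simplify $xy(1-e_2)-(x+y)(e_1-e_3)$ in the same way, cancel the $z$-dependent terms against $-2Q_\kappa$, and finish by substituting $p,q$ into the remaining identity in $x,y,\kappa$; carrying out that last step, the coefficients of $1$, $x^2+y^2$ and $xy$ match as well as that of $x^2y^2$.
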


\begin{proof}
By \eqref{eq:x}, we have $\alpha=2\kappa^2(3+x^2+y^2+z^2)-2$, and \eqref{eq:alphaQ} reduces to $-2+6\kappa^2-4\kappa^3=-2(1-\kappa)^2(1+2\kappa)$.

For \eqref{eq:NQ}, we substitute $\alpha=4\kappa^3(1-e_2)+2Q_\kappa$ and $\eps\delta=4\kappa^3(e_1-e_3)$. Since $(1-e_2)xy-(e_1-e_3)(x+y)=xy-x^2y^2-(x+y)^2-z(x+y)$, we get
\[
N_{xy}=2xyQ_\kappa+4\kappa^3\bigl(xy-x^2y^2-(x+y)^2\bigr)-4\kappa^3z(x+y)+pq-r .
\]
The variable $z$ also occurs in $-r=1-2\kappa^2(1+z^2)$, and
\[
-4\kappa^3z(x+y)-2\kappa^2z^2=-2Q_\kappa-2(1-\kappa)^2(1+2\kappa)+2\kappa^2(x^2+y^2)+4\kappa^3xy .
\]
After this substitution, $N_{xy}-2(xy-1)Q_\kappa$ no longer involves $z$, and equals
\[
4\kappa^3\bigl(2xy-x^2y^2-(x+y)^2\bigr)+pq+1-2\kappa^2-2(1-\kappa)^2(1+2\kappa)+2\kappa^2(x^2+y^2).
\]
Substituting $p=2\kappa^2(1+x^2)-1$ and $q=2\kappa^2(1+y^2)-1$, this expression reduces to $-4\kappa^3(1-\kappa)(1+x^2)(1+y^2)$, which proves \eqref{eq:NQ}.
\end{proof}

\begin{proof}[Proof of Theorem~\ref{thm:angle}]
By Lemmas~\ref{lem:inner} and~\ref{lem:difference}, the equality $\ip{X}{Y}=\ip{X}{Z}$ is equivalent to $(y-z)J=0$, and the other two equalities are equivalent to $(z-x)J=0$ and $(x-y)J=0$. Since $ABC$ is not equilateral, the coordinates $x,y,z$ are not all equal. We deduce that the three pairwise distances are equal if and only if $J=0$, i.e.~by Lemma~\ref{lem:J}, if and only if $\cos\Phi=0$.

We now assume that $\cos\Phi=0$, and let $\sigma=\sin\Phi$. By Lemma~\ref{lem:J}, the product $(\alpha-i\eps\delta)((1-e_2)+i(e_1-e_3))=-4\sigma g/S$ is real. Since the second factor has squared modulus $(1+x^2)(1+y^2)(1+z^2)=S^{-2}$, we get
\[
\alpha-i\eps\delta=-4\sigma gS\bigl((1-e_2)-i(e_1-e_3)\bigr).
\]
By \eqref{eq:theta}, we have $gS=\cos^3\beta$. Letting $\kappa=-\sigma\cos\beta$, this reads
\begin{equation}\label{eq:alpha-delta}
\alpha=4\kappa^3(1-e_2), \ \ \eps\delta=4\kappa^3(e_1-e_3).
\end{equation}
By \eqref{eq:alphaQ}, we have $Q_\kappa=0$, and \eqref{eq:NQ} and \eqref{eq:XY} then give
\[
\ip{X}{Y}=\frac{-4\kappa^3(1-\kappa)}{4\kappa^2(1-\kappa^2)}=-\frac{\kappa}{1+\kappa}\eqdef\rho ,
\]
and the same value for the other two pairs. This is \eqref{eq:side}.

It remains to show that $XYZ$ is nondegenerate. If $\sigma=1$, then $\rho=\cos\beta/(1-\cos\beta)\le1$ forces $\cos\beta\le1/2$. Equality would give $X=Y=Z$; since $\ip{X}{C}=x\cot\beta$ and $\ip{Y}{C}=y\cot\beta$, this would force $x=y$, and similarly $y=z$, which is impossible. Hence $\beta>\pi/3$ and $0<\rho<1$. If $\sigma=-1$, then $-1/2<\rho<0$. In both cases the Gram matrix of $X,Y,Z$, whose eigenvalues are $1-\rho,1-\rho,1+2\rho$, is positive definite, and we conclude that $XYZ$ is nondegenerate.
\end{proof}

\section{Proofs of Theorem~\ref{thm:quadric} and its consequences}\label{sec:proof2}

\begin{proof}[Proof of Theorem~\ref{thm:quadric}]
(a) If $XYZ$ is equilateral, the proof of Theorem~\ref{thm:angle} gives $Q_\kappa=0$ and $\eps\delta=4\kappa^3(e_1-e_3)$ with $\kappa=-\sigma\cos\beta$. Since $\delta>0$, the numbers $\eps$ and $\kappa(e_1-e_3)$ have the same sign.

Conversely, assume that $Q_\kappa=0$ and $\eps\kappa(e_1-e_3)>0$. By \eqref{eq:alphaQ}, we have $\alpha=4\kappa^3(1-e_2)$. On the other hand, Lemma~\ref{lem:area}, \eqref{eq:x} and \eqref{eq:product} give
\[
\alpha^2+\delta^2=2(1+p)(1+q)(1+r)=16\kappa^6(1+x^2)(1+y^2)(1+z^2)=16\kappa^6\bigl((1-e_2)^2+(e_1-e_3)^2\bigr).
\]
Hence $\delta=4|\kappa^3(e_1-e_3)|$, and the sign condition gives $\eps\delta=4\kappa^3(e_1-e_3)$. It follows that $J=0$, and $XYZ$ is equilateral by the proof of Theorem~\ref{thm:angle}.

(b) We define $p,q,r$ by the stated formulas. The bounds $1<1+x^2<\sec^2\beta$ give $-1<p<1$, and similarly for $q,r$. To establish the existence of a triangle, we combine \eqref{eq:poly} and \eqref{eq:alphaQ} with the algebraic identity $(1+x^2)(1+y^2)(1+z^2)=(1-e_2)^2+(e_1-e_3)^2$ from \eqref{eq:product}. This gives
\[
1+2pqr-p^2-q^2-r^2=2(1+p)(1+q)(1+r)-\alpha^2=16\kappa^6(e_1-e_3)^2>0 .
\]
The Gram criterion of \S\ref{sec:prelim} now provides a triangle with these side cosines, unique up to isometry. The positivity of $x,y,z$ gives \eqref{eq:domain}, and the coordinates computed from its sides are the original $x,y,z$. Since they are not all equal, the triangle is not equilateral, and we conclude by applying (a) with $\eps=\sgn(\kappa(e_1-e_3))$.
\end{proof}

We need three elementary facts about the quadrics. Their quadratic form has the eigenvalue $\kappa^2(1+2\kappa)$ in the direction $(1,1,1)$, and the eigenvalue $\kappa^2(1-\kappa)$, with multiplicity two, on its orthogonal complement. We put $K_\kappa=(1-\kappa)^2(1+2\kappa)/\kappa^2$. The inequality $x^2+y^2+z^2\ge e_2$ shows that, for $\kappa>0$, every point of $\cQ_\kappa$ satisfies
\begin{equation}\label{eq:e2-bound}
e_2\le\frac{K_\kappa}{1+2\kappa}=\frac{(1-\kappa)^2}{\kappa^2},
\end{equation}
with equality only on the diagonal. Finally, for $\kappa=\cos\beta$ we have
\begin{equation}\label{eq:K-bound}
K_\kappa<\tan^2\beta=\frac{1-\kappa^2}{\kappa^2},
\end{equation}
since this is equivalent to $(1-\kappa)(1+2\kappa)<1+\kappa$, i.e.~to $2\kappa^2>0$. As a point of $\cQ_{\cos\beta}$ in the open positive octant satisfies $x^2<K_\kappa$, the positive part of $\cQ_{\cos\beta}$ lies in the cube $(0,\tan\beta)^3$.

\begin{proof}[Proof of Corollary~\ref{cor:small}]
By \eqref{eq:theta-bound} we have $\Theta>\pi/2$, and $\Omega/2\in(0,\pi)$. For the outward construction we get $\Phi>\pi/2$, so that by \eqref{eq:branches} only $\Phi=3\pi/2$ is possible, with $\sigma=-1$, i.e.~$\kappa=\cos\beta$; this gives (ii). For the inward construction we get $\Phi>-\pi/2$, so that only $\Phi=\pi/2$ is possible, with $\sigma=1$, which is excluded by Theorem~\ref{thm:angle} since $\beta\le\pi/3$; this gives (i).

For (iii), let $\kappa=\cos\beta\in[1/2,1)$. Both eigenvalues and $K_\kappa$ are positive, so that $\cQ_\kappa$ is an ellipsoid. A point of it in the open positive octant lies in the cube by \eqref{eq:K-bound}. As $\tan\beta\le\sqrt3$, each base angle exceeds $\pi/6$, so that $\Theta>\pi/2$ and $e_1>e_3$. If the point is not diagonal, Theorem~\ref{thm:quadric}(b) provides a triangle, and the construction is the outward one since $\kappa(e_1-e_3)>0$. Finally, near $(0,0,\sqrt{K_\kappa})$ the quadric $\cQ_\kappa$ is the graph of
\begin{equation}\label{eq:zxy}
z(x,y)=-\kappa(x+y)+\sqrt{K_\kappa-x^2-y^2-2\kappa xy+\kappa^2(x+y)^2},
\end{equation}
and $z(x,y)>0$ for small $x,y\ge0$. Taking small $x,y>0$ with $x\ne y$ gives a two-parameter set of non-diagonal points of $\cQ_\kappa$ in the open octant.
\end{proof}

\begin{proof}[Proof of Corollary~\ref{cor:large}]
In this proof we write $c=\cos\beta\in(0,1/2)$. For $\kappa=\pm c$, the two eigenvalues and $K_\kappa$ are positive, so that both quadrics are ellipsoids. The strict inequalities defining the four parts give relatively open subsets of these surfaces. We show that the first three parts are nonempty in the admissible cube, and determine when the fourth one is.

$\cQ_c\cap\{e_1>e_3\}$: by \eqref{eq:K-bound} we have $0<\sqrt{K_c}<\tan\beta$. For small $x,y>0$ with $x\ne y$, the point $(x,y,z(x,y))$ given by \eqref{eq:zxy} with $\kappa=c$ lies on $\cQ_c$ and in the open cube, and $e_1-e_3$ is close to $\sqrt{K_c}>0$.

$\cQ_{-c}\cap\{e_1>e_3\}$: here $K_{-c}=(1+c)^2(1-2c)/c^2>0$, and $K_{-c}<\tan^2\beta$ is equivalent to $(1+c)(1-2c)<1-c$, i.e.~to $2c^2>0$. We then use \eqref{eq:zxy} with $\kappa=-c$ near $(0,0,\sqrt{K_{-c}})$ as before.

$\cQ_{-c}\cap\{e_1<e_3\}$: the positive diagonal point of $\cQ_{-c}$ is $(t,t,t)$, where $t=(1+c)/(\sqrt3\,c)$. Since $c<1/2$, we have $t>\sqrt3$ and $e_1-e_3=t(3-t^2)<0$. Moreover $t<\tan\beta$, because $(1+c)^2<3(1-c^2)$. A sufficiently small neighbourhood of this point on the ellipsoid remains in the open cube and satisfies $e_1<e_3$, and removing the diagonal point leaves a two-dimensional set of admissible points.

$\cQ_c\cap\{e_1<e_3\}$: if $c\ge1/4$, then \eqref{eq:e2-bound} gives $e_2\le(1-c)^2/c^2\le9$, and the arithmetic--geometric mean inequality $e_1e_2\ge9e_3$, with equality only for $x=y=z$, gives $e_3\le e_1e_2/9\le e_1$; hence there are no non-diagonal points. If $c<1/4$, the diagonal point $t=(1-c)/(\sqrt3\,c)>\sqrt3$ of $\cQ_c$ satisfies $e_1<e_3$ and lies in the open cube by \eqref{eq:K-bound}, and the argument of the previous case applies.

The statement about the two sizes follows from Theorem~\ref{thm:quadric}(a).
\end{proof}

\begin{proof}[Proof of Proposition~\ref{prop:DEF}]
The inequality $\cos\ell<0$ forces $\sigma=-1$ in \eqref{eq:side}. We may therefore take $\kappa=\cos\beta$ in \eqref{eq:alpha-delta}, and $\ip{X}{Y}=-\kappa/(1+\kappa)$. Formula~\eqref{eq:X} gives $\ip{X}{C+A}=(x\alpha-\eps\delta)/(\tan\beta\,(1+p))$. Using \eqref{eq:alpha-delta} and the identity $x(1-e_2)-(e_1-e_3)=-(1+x^2)(y+z)$, we obtain $x\alpha-\eps\delta=-4\kappa^3(1+x^2)(y+z)$, and hence
\[
\ip{X}{C+A}=-2\kappa(y+z)\cot\beta, \text{ and similarly } \ip{Y}{B+C}=-2\kappa(x+z)\cot\beta .
\]
Using Lemma~\ref{lem:construction}(iii), $\sec\beta=1/\kappa$ and $\cot^2\beta=\kappa^2/(1-\kappa^2)$, we get
\begin{align*}
\ip{D}{E}&=\frac{(1+\kappa)^2}{\kappa^2}xy\cot^2\beta\,\ip{X}{Y}-\frac{1+\kappa}{2\kappa^2}\cot\beta\bigl(x\ip{X}{C+A}+y\ip{Y}{B+C}\bigr)+\frac{\alpha}{4\kappa^2}\\
&=\frac{\kappa}{1-\kappa}\bigl(-xy+x(y+z)+y(x+z)\bigr)+\kappa(1-e_2)=\kappa+\frac{\kappa^2e_2}{1-\kappa}.
\end{align*}
This expression is symmetric in $x,y,z$, so that the three pairwise inner products of $D,E,F$ are equal, with the value given in \eqref{eq:DEF}. Since $e_2>0$, their common value $\rho_D$ is greater than $\cos\beta$. Since $ABC$ is not equilateral, the inequality \eqref{eq:e2-bound} is strict, and it gives $\rho_D<1$. Thus $\cos\beta<\rho_D<1$, and the Gram eigenvalues $1-\rho_D,1-\rho_D,1+2\rho_D$ are positive. We conclude that $DEF$ is nondegenerate, and that its side satisfies $0<m<\beta$.
\end{proof}

We point out that the choice $\kappa=\cos\beta$ is used in the equality $\sec\beta=1/\kappa$. For the other choice of $\kappa$, the same computation does not yield a symmetric expression for $\ip{D}{E}$.

\section{Examples}\label{sec:examples}

The following examples illustrate, in turn, the classical spherical case, an input common to both constructions, and the two additional families of Table~\ref{tab:summary}. The numerical values were obtained from \eqref{eq:X} and Lemma~\ref{lem:construction}(iii). All angles, including the area $\Omega$ expressed as a spherical excess, are given in degrees.

\begin{example}[$\beta=\pi/3$]\label{ex:DNV}
Take $(x,y,z)=\frac{\sqrt2}{5}(1,2,3)$ on the ellipsoid $x^2+y^2+z^2+xy+yz+zx=2$. Then $e_2=22/25$, and the side cosines are $-23/50$, $-17/50$, $-7/50$, so that the sides are approximately $117.39^\circ$, $109.88^\circ$, $98.05^\circ$. This is the triangle of \cite[Example~1.6]{DNV24}. Its base angles are approximately $74.21^\circ$, $60.50^\circ$, $49.68^\circ$, so that $\Theta\approx184.40^\circ$, and $\Omega\approx171.21^\circ$; the exact relation is $\Theta+\Omega/2=270^\circ$. The outward centres have common inner product $-1/3$, and $\cos m=(1+e_2)/2=47/50$. We refer to Figure~\ref{fig:dnv} for a picture.
\end{example}

\begin{figure}[H]
\centering
\begin{tikzpicture}[scale=1.3]
\draw[thick] (-0.965,-0.427) -- (-0.920,-0.470) -- (-0.874,-0.511) -- (-0.827,-0.549) -- (-0.779,-0.585) -- (-0.730,-0.617) -- (-0.680,-0.648) -- (-0.630,-0.675) -- (-0.579,-0.701) -- (-0.528,-0.723) -- (-0.477,-0.744) -- (-0.425,-0.762) -- (-0.372,-0.778) -- (-0.320,-0.792) -- (-0.267,-0.804) -- (-0.214,-0.813) -- (-0.161,-0.821) -- (-0.108,-0.826) -- (-0.055,-0.829) -- (-0.002,-0.831) -- (0.051,-0.830) -- (0.104,-0.826) -- (0.157,-0.821) -- (0.210,-0.814) -- (0.263,-0.805) -- (0.316,-0.793) -- (0.368,-0.780) -- (0.421,-0.764) -- (0.473,-0.746) -- (0.524,-0.725);
\draw[thick] (0.524,-0.725) -- (0.528,-0.660) -- (0.531,-0.597) -- (0.531,-0.535) -- (0.529,-0.476) -- (0.526,-0.418) -- (0.521,-0.362) -- (0.515,-0.308) -- (0.508,-0.255) -- (0.499,-0.202) -- (0.489,-0.151) -- (0.478,-0.101) -- (0.466,-0.052) -- (0.453,-0.004) -- (0.439,0.044) -- (0.424,0.091) -- (0.407,0.138) -- (0.390,0.184) -- (0.371,0.230) -- (0.351,0.275) -- (0.330,0.320) -- (0.308,0.365) -- (0.284,0.410) -- (0.259,0.455) -- (0.232,0.500) -- (0.204,0.545) -- (0.174,0.591) -- (0.142,0.636) -- (0.107,0.681) -- (0.071,0.727);
\draw[thick] (0.071,0.727) -- (0.019,0.709) -- (-0.031,0.689) -- (-0.080,0.668) -- (-0.128,0.645) -- (-0.175,0.621) -- (-0.220,0.595) -- (-0.265,0.568) -- (-0.308,0.540) -- (-0.350,0.511) -- (-0.392,0.480) -- (-0.432,0.448) -- (-0.471,0.414) -- (-0.509,0.379) -- (-0.547,0.343) -- (-0.583,0.305) -- (-0.618,0.265) -- (-0.653,0.224) -- (-0.686,0.181) -- (-0.718,0.137) -- (-0.750,0.090) -- (-0.780,0.042) -- (-0.808,-0.008) -- (-0.836,-0.061) -- (-0.862,-0.116) -- (-0.886,-0.173) -- (-0.909,-0.232) -- (-0.930,-0.294) -- (-0.949,-0.359) -- (-0.965,-0.427);
\draw[thin,gray] (1.689,0.551) -- (1.677,0.462) -- (1.660,0.376) -- (1.638,0.293) -- (1.613,0.214) -- (1.584,0.139) -- (1.552,0.068) -- (1.517,0.000) -- (1.479,-0.063) -- (1.440,-0.124) -- (1.399,-0.180) -- (1.357,-0.233) -- (1.313,-0.283) -- (1.268,-0.329) -- (1.223,-0.373) -- (1.177,-0.413) -- (1.130,-0.451) -- (1.083,-0.485) -- (1.036,-0.517) -- (0.989,-0.547) -- (0.941,-0.574) -- (0.894,-0.599) -- (0.847,-0.621) -- (0.800,-0.642) -- (0.753,-0.660) -- (0.707,-0.677) -- (0.661,-0.691) -- (0.615,-0.704) -- (0.569,-0.715) -- (0.524,-0.725);
\draw[thin,gray] (1.689,0.551) -- (1.615,0.602) -- (1.541,0.647) -- (1.468,0.686) -- (1.396,0.720) -- (1.326,0.750) -- (1.256,0.775) -- (1.188,0.797) -- (1.122,0.815) -- (1.057,0.830) -- (0.994,0.841) -- (0.932,0.850) -- (0.872,0.857) -- (0.814,0.861) -- (0.757,0.863) -- (0.702,0.863) -- (0.649,0.862) -- (0.597,0.858) -- (0.546,0.853) -- (0.497,0.847) -- (0.449,0.840) -- (0.402,0.831) -- (0.357,0.821) -- (0.313,0.811) -- (0.270,0.799) -- (0.228,0.786) -- (0.187,0.773) -- (0.148,0.758) -- (0.109,0.743) -- (0.071,0.727);
\draw[thin,gray] (-1.308,1.175) -- (-1.230,1.182) -- (-1.155,1.185) -- (-1.083,1.185) -- (-1.014,1.182) -- (-0.947,1.176) -- (-0.883,1.169) -- (-0.822,1.160) -- (-0.763,1.148) -- (-0.706,1.136) -- (-0.651,1.122) -- (-0.599,1.107) -- (-0.549,1.090) -- (-0.500,1.073) -- (-0.454,1.055) -- (-0.409,1.036) -- (-0.366,1.017) -- (-0.325,0.997) -- (-0.285,0.976) -- (-0.246,0.955) -- (-0.209,0.934) -- (-0.173,0.912) -- (-0.139,0.890) -- (-0.106,0.867) -- (-0.074,0.844) -- (-0.043,0.821) -- (-0.013,0.798) -- (0.016,0.775) -- (0.044,0.751) -- (0.071,0.727);
\draw[thin,gray] (-1.308,1.175) -- (-1.338,1.102) -- (-1.363,1.029) -- (-1.384,0.957) -- (-1.401,0.884) -- (-1.413,0.813) -- (-1.422,0.742) -- (-1.427,0.672) -- (-1.428,0.604) -- (-1.426,0.537) -- (-1.421,0.471) -- (-1.413,0.407) -- (-1.403,0.345) -- (-1.390,0.285) -- (-1.375,0.226) -- (-1.357,0.170) -- (-1.338,0.115) -- (-1.316,0.062) -- (-1.293,0.012) -- (-1.269,-0.037) -- (-1.243,-0.084) -- (-1.216,-0.130) -- (-1.188,-0.173) -- (-1.158,-0.214) -- (-1.128,-0.254) -- (-1.097,-0.292) -- (-1.065,-0.328) -- (-1.032,-0.363) -- (-0.999,-0.396) -- (-0.965,-0.427);
\draw[thin,gray] (-0.371,-1.729) -- (-0.425,-1.688) -- (-0.476,-1.645) -- (-0.524,-1.602) -- (-0.569,-1.557) -- (-0.611,-1.511) -- (-0.650,-1.464) -- (-0.686,-1.417) -- (-0.720,-1.369) -- (-0.751,-1.321) -- (-0.780,-1.273) -- (-0.806,-1.225) -- (-0.830,-1.177) -- (-0.852,-1.129) -- (-0.872,-1.082) -- (-0.889,-1.034) -- (-0.905,-0.987) -- (-0.919,-0.941) -- (-0.930,-0.895) -- (-0.941,-0.849) -- (-0.949,-0.804) -- (-0.956,-0.760) -- (-0.962,-0.716) -- (-0.966,-0.673) -- (-0.969,-0.630) -- (-0.970,-0.588) -- (-0.971,-0.547) -- (-0.970,-0.506) -- (-0.968,-0.466) -- (-0.965,-0.427);
\draw[thin,gray] (-0.371,-1.729) -- (-0.310,-1.700) -- (-0.252,-1.670) -- (-0.197,-1.638) -- (-0.145,-1.605) -- (-0.095,-1.572) -- (-0.048,-1.537) -- (-0.004,-1.502) -- (0.038,-1.467) -- (0.078,-1.431) -- (0.116,-1.395) -- (0.151,-1.359) -- (0.185,-1.322) -- (0.216,-1.286) -- (0.246,-1.249) -- (0.274,-1.213) -- (0.300,-1.176) -- (0.325,-1.140) -- (0.348,-1.104) -- (0.370,-1.068) -- (0.391,-1.033) -- (0.410,-0.998) -- (0.428,-0.963) -- (0.445,-0.928) -- (0.461,-0.893) -- (0.475,-0.859) -- (0.489,-0.825) -- (0.502,-0.791) -- (0.513,-0.758) -- (0.524,-0.725);
\draw[thick,dashed] (1.689,0.551) -- (1.710,0.677) -- (1.720,0.810) -- (1.719,0.949) -- (1.703,1.094) -- (1.673,1.243) -- (1.625,1.395) -- (1.560,1.547) -- (1.475,1.695) -- (1.370,1.838) -- (1.246,1.971) -- (1.103,2.091) -- (0.944,2.195) -- (0.771,2.279) -- (0.587,2.341) -- (0.396,2.379) -- (0.203,2.393) -- (0.011,2.383) -- (-0.175,2.349) -- (-0.352,2.294) -- (-0.517,2.220) -- (-0.667,2.131) -- (-0.802,2.028) -- (-0.920,1.916) -- (-1.022,1.796) -- (-1.108,1.673) -- (-1.179,1.547) -- (-1.235,1.421) -- (-1.277,1.297) -- (-1.308,1.175);
\draw[thick,dashed] (-1.308,1.175) -- (-1.425,1.131) -- (-1.544,1.075) -- (-1.663,1.005) -- (-1.779,0.921) -- (-1.892,0.822) -- (-1.999,0.708) -- (-2.097,0.578) -- (-2.183,0.433) -- (-2.255,0.274) -- (-2.310,0.103) -- (-2.344,-0.078) -- (-2.357,-0.265) -- (-2.346,-0.455) -- (-2.311,-0.645) -- (-2.253,-0.828) -- (-2.172,-1.003) -- (-2.070,-1.165) -- (-1.951,-1.310) -- (-1.818,-1.438) -- (-1.673,-1.546) -- (-1.522,-1.635) -- (-1.367,-1.703) -- (-1.211,-1.752) -- (-1.056,-1.783) -- (-0.906,-1.798) -- (-0.761,-1.798) -- (-0.623,-1.786) -- (-0.493,-1.762) -- (-0.371,-1.729);
\draw[thick,dashed] (-0.371,-1.729) -- (-0.274,-1.810) -- (-0.166,-1.887) -- (-0.046,-1.957) -- (0.086,-2.018) -- (0.229,-2.068) -- (0.384,-2.105) -- (0.547,-2.127) -- (0.718,-2.131) -- (0.895,-2.115) -- (1.074,-2.077) -- (1.251,-2.016) -- (1.423,-1.932) -- (1.585,-1.826) -- (1.734,-1.699) -- (1.867,-1.554) -- (1.979,-1.393) -- (2.069,-1.221) -- (2.136,-1.042) -- (2.180,-0.859) -- (2.200,-0.677) -- (2.199,-0.499) -- (2.179,-0.328) -- (2.141,-0.166) -- (2.089,-0.016) -- (2.024,0.123) -- (1.950,0.249) -- (1.868,0.362) -- (1.780,0.462) -- (1.689,0.551);
\fill (-0.965,-0.427) circle (1.3pt); \node at (-1.226,-0.529) {$A$};
\fill (0.524,-0.725) circle (1.3pt); \node at (0.711,-0.934) {$B$};
\fill (0.071,0.727) circle (1.3pt); \node at (0.116,1.003) {$C$};
\fill (1.689,0.551) circle (1.3pt); \node at (1.953,0.645) {$X$};
\fill (-1.308,1.175) circle (1.3pt); \node at (-1.506,1.373) {$Y$};
\fill (-0.371,-1.729) circle (1.3pt); \node at (-0.422,-2.004) {$Z$};
\end{tikzpicture}
\caption{Stereographic picture of Example~\ref{ex:DNV}: the triangle $ABC$ (thick), the isosceles triangles $XBC$, $YCA$, $ZAB$ with apex angle $2\pi/3$ (thin), and the equilateral triangle $XYZ$ with side $\arccos(-1/3)\approx109.47^\circ$ (dashed).}
\label{fig:dnv}
\end{figure}

\begin{example}[both constructions]\label{ex:both}
Let $\cos^2\beta=11/60$, so that $\beta\approx64.65^\circ$, and take $\cos a=\cos b=-3/5$ and $\cos c=1/5$. The sides of $ABC$ are approximately $126.87^\circ$, $126.87^\circ$, $78.46^\circ$. We have $(x,y,z)=(1,1,5)/\sqrt{11}$, $e_2=1$, $e_1-e_3=72/(11\sqrt{11})>0$ and $\alpha=0$. Hence $\Theta=\Omega=180^\circ$, and the parameter point lies on both quadrics. The outward triangle of centres has $\cos\ell=-\cos\beta/(1+\cos\beta)$, i.e.~$\ell\approx107.45^\circ$, while the inward one has $\cos\ell=\cos\beta/(1-\cos\beta)$, i.e.~$\ell\approx41.51^\circ$. The sign condition of Theorem~\ref{thm:quadric} selects $\cQ_{\cos\beta}$ for the outward construction and $\cQ_{-\cos\beta}$ for the inward one. Accordingly, the outward $DEF$ is equilateral, with $\cos m=\cos\beta/(1-\cos\beta)$, whereas the inward $DEF$ is not: its pairwise inner products are approximately $-0.6468$, $-0.2391$, $-0.2391$.
\end{example}

For $\cos\beta=1/3$, the two quadrics are
\[
\cQ_{1/3}: 3(x^2+y^2+z^2)+2e_2=20, \ \ \cQ_{-1/3}: 3(x^2+y^2+z^2)-2e_2=16,
\]
the cube is $(0,2\sqrt2)^3$, and the possible sides satisfy $\cos\ell=-1/4$ or $\cos\ell=1/2$.

\begin{example}[$\cos\beta=1/3$, second outward family]\label{ex:second}
The point $(2,\frac83,\frac{10}9)$ lies on $\cQ_{-1/3}$, and $e_1-e_3=-4/27<0$. Its side cosines are $1/9$, $65/81$, $-367/729$, so that the sides are approximately $83.62^\circ$, $36.63^\circ$, $120.23^\circ$. Here $\Theta\approx89.11^\circ$ and $\Omega\approx1.78^\circ$, and the exact relation is $\Theta+\Omega/2=90^\circ$. Thus the outward $XYZ$ is equilateral with side $60^\circ$. The triangle $DEF$ has sides approximately $54.53^\circ$, $127.22^\circ$, $167.24^\circ$, so it is not equilateral. For comparison, the point $(1,2,\frac{2\sqrt3-3}{3})\in\cQ_{1/3}$ satisfies $e_1-e_3>0$ and gives an outward equilateral $XYZ$ with $\cos\ell=-1/4$. These two inputs realize the two outward sizes for the same $\beta$.
\end{example}

\begin{example}[$\cos\beta=1/5$, second inward family]\label{ex:fourth}
We now take $\cos\beta=1/5<1/4$. The point $(\frac43,2,\frac{10}3)$ lies on $\cQ_{1/5}$, whose equation is $5(x^2+y^2+z^2)+2e_2=112$, and $e_1-e_3=-20/9<0$. The side cosines are $-7/9$, $-3/5$, $-7/225$, so that the sides are approximately $141.06^\circ$, $126.87^\circ$, $91.78^\circ$. The area is $\Omega\approx340.27^\circ$, while the base angles $\operatorname{arccot}(4/3)$, $\operatorname{arccot}2$, $\operatorname{arccot}(10/3)$ have sum $\Theta\approx80.13^\circ$. Thus $\Theta-\Omega/2=-90^\circ$, and the inward $XYZ$ is equilateral with $\cos\ell=-1/6$. Proposition~\ref{prop:DEF} shows that $DEF$ is equilateral as well, with $\cos m=8/9$.
\end{example}

\begin{remark}[Small triangles]\label{rem:limit}
We keep $\beta$ fixed and consider triangles with $a=\rho a_0+o(\rho)$, $b=\rho b_0+o(\rho)$, $c=\rho c_0+o(\rho)$ as $\rho\to0$, where $a_0,b_0,c_0$ are the sides of a nondegenerate Euclidean triangle. We put $\diam=\max(a,b,c)$. Then $\Omega\to0$ and each base angle tends to $\pi/2-\beta$. For either construction, $\cos\Phi\to\cos(3\pi/2-3\beta)=-\sin3\beta$, which is nonzero unless $\beta=\pi/3$. For $\beta=\pi/3$, the next term explains the relation with the planar theorem. From $\sin\theta_a=1/(2\cos(a/2))$ we obtain $\theta_a=\pi/6+a^2/(8\sqrt3)+O(a^4)$. Let $\Delta$ be the area of the Euclidean triangle with sides $a,b,c$, and put $s_0=(a+b+c)/2$. L'Huilier's formula
\[
\tan\frac\Omega4=\sqrt{\tan\frac{s_0}{2}\tan\frac{s_0-a}{2}\tan\frac{s_0-b}{2}\tan\frac{s_0-c}{2}}
\]
and Heron's formula give $\Omega=\Delta+O(\diam^4)$. Therefore, for the inward construction,
\[
\Theta-\frac\Omega2-\frac\pi2=\frac{1}{8\sqrt3}\bigl(a^2+b^2+c^2-4\sqrt3\,\Delta\bigr)+O(\diam^4).
\]
The quadratic term is nonnegative by Weitzenb\"ock's inequality \cite{BDJ+69}, and vanishes if and only if $a=b=c$. In other words, to leading order it measures how far the inward construction is from the equation $\Theta-\Omega/2=\pi/2$. For the outward construction, $\Theta+\Omega/2$ tends to $\pi/2$, whereas its only admissible value is $3\pi/2$. We stress that the expansion only describes this limiting behaviour; the exact exclusion of non-equilateral inward solutions is Corollary~\ref{cor:small}.
\end{remark}

\section{Further questions}\label{sec:questions}

\begin{question}
The appendix proves the angle criterion by composing the rotations about $X,Y,Z$, and uses quaternions to compute the composition. Can this argument be replaced by a synthetic proof?
\end{question}

\begin{question}
What is the corresponding classification in the hyperbolic plane? The Euclidean chord-angle definition of a $\beta$-point has no direct intrinsic analogue, but Remark~\ref{rem:equivalent} suggests erecting isosceles hyperbolic triangles with apex angle $2\beta$. Their base angles are automatically smaller than $\pi/2-\beta$, so that there is no ambiguity between acute and obtuse base angles. For $\beta=\pi/3$, this construction should be compared with \cite{DNV25}.
\end{question}

\begin{question}
For which $ABC$ and $\beta$ is $DEF$ equilateral? Proposition~\ref{prop:DEF} gives a sufficient condition, but Remark~\ref{rem:DEF}(2) shows that the answer extends beyond inputs with equilateral $XYZ$.
\end{question}

\begin{question}
The Petr--Douglas--Neumann theorem extends Napoleon's theorem to polygons (see the references in \cite{DNV25}). Is there a spherical counterpart obtained by imposing similarity on chord polygons?
\end{question}

\appendix
\section{A rotation interpretation}\label{sec:appendix}

We keep the notation and assumptions of \S\ref{sec:construction}, and recall that $\Phi=\Theta+\eps\Omega/2$. We write a quaternion as a pair $(u,v)$ with $u\in\R$ and $v\in\R^3$, with the product
\[
(u,v)(u',v')=(uu'-\ip{v}{v'},\,uv'+u'v+v\times v').
\]
We identify $U\in\R^3$ with the pure quaternion $(0,U)$, and write $\bar q$ for the conjugate of $q$. Recall that the unit quaternion $r_N(\lambda)\defeq(\cos\lambda,\sin\lambda\,N)$, where $|N|=1$, acts by $U\mapsto r_N(\lambda)U\overline{r_N(\lambda)}$, and represents the rotation through the angle $2\lambda$ about the oriented axis $N$ (right-hand rule); products of quaternions correspond to compositions of rotations. For distinct, non-antipodal unit vectors $U,V$, we set
\[
h_{UV}\defeq\frac{(1+\ip{U}{V},U\times V)}{\sqrt{2(1+\ip{U}{V})}}.
\]
This is the quaternion of the rotation through $d(U,V)$ about $U\times V$, which carries $U$ to $V$. More generally, if a unit quaternion $q$ carries $U$ to $V$, then $q\,r_U(\lambda)\bar q=r_V(\lambda)$, i.e.~$r_V(\lambda)q=q\,r_U(\lambda)$.

Let $R_X,R_Y,R_Z$ be the rotations about the axes $X,Y,Z$ through the signed angle $-2\eps\beta$. They are represented by
\[
q_X=(\cos\beta,-\eps\sin\beta\,X),\ q_Y=(\cos\beta,-\eps\sin\beta\,Y),\ q_Z=(\cos\beta,-\eps\sin\beta\,Z).
\]
The factorization of Lemma~\ref{lem:factor} below shows that $R_X$ first rotates about $B$ and then carries $B$ to $C$ along the shorter arc $BC$. In particular $R_X(B)=C$, and cyclically $R_Y(C)=A$ and $R_Z(A)=B$.

\begin{lemma}\label{lem:factor}
We have $q_X=h_{BC}\,r_B\bigl(-\eps(\pi/2-\theta_a)\bigr)$, and cyclically.
\end{lemma}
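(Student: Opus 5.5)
The plan is to prove the identity by a direct quaternion computation, expanding the right-hand side and matching scalar and vector parts with $q_X$.

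\emph{Scalar part.} Write $\lambda=-\eps(\pi/2-\theta_a)$, so that $\cos\lambda=\sin\theta_a$ and $\sin\lambda=-\eps\cos\theta_a$. By the product rule,
\[
h_{BC}\,r_B(\lambda)=\frac{1}{\sqrt{2(1+p)}}\Bigl((1+p)\cos\lambda-\sin\lambda\,\ip{B\times C}{B},\;(1+p)\sin\lambda\,B+\cos\lambda\,B\times C+\sin\lambda\,(B\times C)\times B\Bigr).
\]
Since $\ip{B\times C}{B}=0$, the scalar part is $\sqrt{(1+p)/2}\,\sin\theta_a=\cos(a/2)\sin\theta_a$. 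By \eqref{eq:theta} this equals $\cos\beta$, as required.

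\emph{Vector part.} Use $(B\times C)\times B=C-pB$. The vector part becomes
\[
\frac{1}{\sqrt{2(1+p)}}\Bigl(\sin\lambda\,(B+C)+\cos\lambda\,B\times C\Bigr),
\]
because $(1+p)B-pB=B$. Substituting $\lambda$ gives
\[
\frac{-\eps\cos\theta_a\,(B+C)+\sin\theta_a\,B\times C}{2\cos(a/2)}.
\]
We must compare this with $-\eps\sin\beta\,X$. By \eqref{eq:X},
\[
-\eps\sin\beta\,X=\frac{-\eps x\cos\beta\,(B+C)+\cos\beta\,B\times C}{1+p},
\]
using $\eps^2=1$.

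\emph{Matching coefficients.} By \eqref{eq:theta}, $\cos\beta=\cos(a/2)\sin\theta_a$, and $1+p=2\cos^2(a/2)$. Hence
\[
\frac{\cos\beta}{1+p}=\frac{\sin\theta_a}{2\cos(a/2)},
\]
which matches the $B\times C$ coefficients. For the $B+C$ coefficient we need $x\cos\beta/(1+p)=\cos\theta_a/(2\cos(a/2))$. This follows from $x=\cot\theta_a$ together with the same relation, since $x\sin\theta_a=\cos\theta_a$. The cyclic statements follow by permuting $(A,B,C)$ and $(a,b,c)$.

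The only delicate point is sign bookkeeping. One needs the orientation convention of $r_N$, the sign of $\lambda$, and the order of the factors, since the rotation about $B$ acts first and $h_{BC}$ second. One should also note that the $\eps$ in $X$ enters only through the $B\times C$ term, after multiplying by $-\eps$. With $\lambda=-\eps(\pi/2-\theta_a)$ the computation above aligns exactly with \eqref{eq:X}. So there is no real obstacle beyond checking these signs.
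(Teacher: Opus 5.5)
Your proof is correct and follows essentially the paper's argument. You expand the product, use $\ip{B\times C}{B}=0$ and $(B\times C)\times B=C-pB$, and then match the scalar part and the $B+C$ and $B\times C$ coefficients with \eqref{eq:X} via \eqref{eq:theta}; the only cosmetic difference is that the paper writes $h_{BC}=(\cos\frac a2,\sin\frac a2\,n)$ with $n=(B\times C)/\sin a$, while you work directly with $(1+p,B\times C)/\sqrt{2(1+p)}$.
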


\begin{proof}
We write $h_{BC}=(\cos(a/2),\sin(a/2)\,n)$, where $n=(B\times C)/\sin a$, and put $\mu=-\eps(\pi/2-\theta_a)$, so that $\cos\mu=\sin\theta_a$ and $\sin\mu=-\eps\cos\theta_a$. As $n\perp B$, the real part of $h_{BC}r_B(\mu)$ is $\cos(a/2)\sin\theta_a=\cos\beta$. For its vector part, we use $n\times B=(C-pB)/\sin a$ and $2\cos^2(a/2)=1+p$ to obtain
\[
\cos\tfrac a2\sin\mu\,B+\cos\mu\sin\tfrac a2\,n+\sin\tfrac a2\sin\mu\,n\times B
=\frac{-\eps\cos\theta_a\,(B+C)+\sin\theta_a\,B\times C}{2\cos\frac a2}.
\]
By \eqref{eq:theta}, we have $\sin\theta_a=\cos\beta/\cos\frac a2$ and $\cos\theta_a=x\sin\theta_a$. Hence the vector part equals $\cos\beta\,(-\eps x(B+C)+B\times C)/(1+p)=-\eps\sin\beta\,X$ by \eqref{eq:X}.
\end{proof}

\begin{proposition}\label{prop:rotation}
We have $q_Zq_Yq_X=(-\sin\Phi,\eps\cos\Phi\,B)$. In particular, $R_Z\circ R_Y\circ R_X$ is the rotation about $B$ through the angle $\eps(2\Phi-\pi)$ modulo $2\pi$, and it is the identity if and only if $\cos\Phi=0$.
\end{proposition}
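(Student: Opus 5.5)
We will compute $q_Zq_Yq_X$ using the factorizations of Lemma~\ref{lem:factor}. Write $\mu_a=-\eps(\pi/2-\theta_a)$ and define $\mu_b,\mu_c$ in the same way, so that
\[
q_Zq_Yq_X=h_{AB}\,r_A(\mu_c)\,h_{CA}\,r_C(\mu_b)\,h_{BC}\,r_B(\mu_a).
\]
The intertwining rule $r_V(\lambda)q=q\,r_U(\lambda)$, valid whenever $q$ carries $U$ to $V$, moves each elementary rotation to the right. Since $h_{BC}$ carries $B$ to $C$, we get $r_C(\mu_b)h_{BC}=h_{BC}r_B(\mu_b)$. Since $h_{CA}h_{BC}$ carries $B$ to $A$, we get $r_A(\mu_c)h_{CA}h_{BC}=h_{CA}h_{BC}\,r_B(\mu_c)$. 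Rotations about the common axis $B$ commute and add, so
\[
q_Zq_Yq_X=H\,r_B(\mu_a+\mu_b+\mu_c),\qquad H\defeq h_{AB}h_{CA}h_{BC},
\]
with $\mu_a+\mu_b+\mu_c=-\eps(3\pi/2-\Theta)$.

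The key step is to identify $H$. The rotation $H$ carries $B$ to $C$, then to $A$, then back to $B$, so it fixes $B$. Its quaternion is therefore $(\cos\nu,\sin\nu\,B)$ for some $\nu$. This is the holonomy of the geodesic triangle, so we expect it to be a rotation about $B$ by an angle $\pm\Omega$ modulo $2\pi$, with $H=\pm(\cos(\Omega/2),\pm\sin(\Omega/2)B)$. I would compute it directly rather than invoke holonomy, because the signs matter.

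The scalar part of $h_{AB}h_{CA}h_{BC}$ is, up to the normalizing factor $1/\bigl(8\cos\frac a2\cos\frac b2\cos\frac c2\bigr)=1/(8g)$, the scalar part of the product of the unnormalized quaternions $(1+r,A\times B)(1+q,C\times A)(1+p,B\times C)$. That product expands into terms involving $\alpha$-type sums and the triple product $\delta$. The projection of the vector part onto $B$ will similarly give $\delta$-type and $\alpha$-type terms. Using Lemma~\ref{lem:area}, $\alpha+i\delta=4g\,e^{i\Omega/2}$, I would aim to show
\[
H=(-\cos(\Omega/2),\,\sin(\Omega/2)\,B).
\]
The scalar part can be checked cheaply on a small triangle, where $H\approx$ identity up to the sign convention of $h$ products. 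Here it gives $-1$, since the total turning is $2\pi$. This expansion is the main computational obstacle: the cross-product identities are messy. To keep it manageable, I would use the fact that $H$ is parallel to $(\cdot,B)$ and compute only two scalar quantities, the real part and the inner product of the vector part with $B$. Both are polynomial in $p,q,r,\delta$ after using $\ip{A\times B}{C\times A}=qr-p$ and similar identities.

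Finally, multiply $(-\cos(\Omega/2),\sin(\Omega/2)B)\,(\cos\mu,\sin\mu\,B)$ with $\mu=-\eps(3\pi/2-\Theta)$. The result is $(-\cos(\Omega/2-\mu),\sin(\Omega/2+\mu)B)$, taking care with the factor $\eps$ in front of $\Omega$, which comes from the orientation $\delta>0$. Simplifying $3\pi/2$ shifts then gives $(-\sin\Phi,\eps\cos\Phi\,B)$; I would fix any residual sign by the consistency check $\eps\to-\eps$. Reading off $(\cos\lambda,\sin\lambda B)$ with $\cos\lambda=-\sin\Phi$ and $\sin\lambda=\eps\cos\Phi$ gives $\lambda=\eps(\Phi-\pi/2)$. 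Hence the rotation angle is $2\lambda=\eps(2\Phi-\pi)$ modulo $2\pi$. This is the identity exactly when the quaternion is $\pm1$, that is, when $\cos\Phi=0$.
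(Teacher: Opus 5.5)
Your reduction $q_Zq_Yq_X=H\,r_B(\mu_a+\mu_b+\mu_c)$ with $H=h_{AB}h_{CA}h_{BC}$ is correct, and it is how the paper begins. The step that carries all the content, identifying $H$, is only announced, and the value you aim for is false. In fact
\[
H=\frac{(\alpha,\delta B)}{4g}=\Bigl(\cos\frac{\Omega}{2},\sin\frac{\Omega}{2}\,B\Bigr)=r_B(\Omega/2).
\]
Your target $(-\cos\frac{\Omega}{2},\sin\frac{\Omega}{2}\,B)=r_B(\pi-\Omega/2)$ represents the rotation through $-\Omega$ instead.

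A quick test: take $A,B,C$ orthonormal with $[A,B,C]=1$, so $\Omega=\pi/2$ and $B\times C=A$, $C\times A=B$, $A\times B=C$. Then $h_{CA}h_{BC}=\frac12(1,A+B-C)$ and $H=(1,B)/\sqrt2$.

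Your small-triangle heuristic is misapplied. Since $h_{UV}\to(1,0)$ as $V\to U$, we get $H\to+1$. The $-1$ coming from the total turning $2\pi$ lives in the vertex factor $r_B(\mu_a+\mu_b+\mu_c)$, which tends to $r_B(-\eps\pi)=-1$ when $\beta=\pi/3$. It does not live in the transport $H$.

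The paper obtains $H$ from exactly the two scalars you propose but do not compute. Write $h_{CA}h_{BC}=(\alpha,V)/(4\cos\frac a2\cos\frac b2)$ with $V=(1+q)\,B\times C+(1+p)\,C\times A-\delta C$. Then the real part of $8gH$ is
\[
(1+r)\alpha-(1+q)(rp-q)-(1+p)(qr-p)+\delta^2=2\alpha .
\]
The $B$-component is
\[
\ip{8gH_{\mathrm{vec}}}{B}=(1+r)\ip{V}{B}+\ip{A\times B}{V\times B}=(1+r)\delta+(1-r)\delta=2\delta .
\]
Lemma~\ref{lem:area} then gives $H=r_B(\Omega/2)$.

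This is not a cosmetic sign. With your $H$, the product is
\[
\Bigl(-\cos\bigl(\tfrac{\Omega}{2}-\mu\bigr),\sin\bigl(\tfrac{\Omega}{2}-\mu\bigr)B\Bigr)=-r_B(\mu-\Omega/2).
\]
(The vector part $\sin(\frac{\Omega}{2}+\mu)$ that you wrote is a slip.) Since $\mu-\Omega/2=\eps(\Theta-\eps\Omega/2-3\pi/2)$, this quaternion is $\pm1$ exactly when $\cos(\Theta-\eps\Omega/2)=0$. So the outward and inward criteria get interchanged.

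The relative sign of $\eps$ and $\Omega$ in $\Phi$ is precisely what the proposition asserts. It must come out of the computation of $H$; a ``consistency check $\eps\to-\eps$'' afterwards cannot repair it.

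Once $H=r_B(\Omega/2)$ is established, the conclusion is immediate. Using $\Omega/2+\eps\Theta=\eps\Phi$,
\[
q_Zq_Yq_X=r_B(\Omega/2+\mu)=r_B\bigl(\eps(\Phi-3\pi/2)\bigr)=(-\sin\Phi,\eps\cos\Phi\,B).
\]
Your final step (identity iff the quaternion is $\pm1$ iff $\cos\Phi=0$) is then fine.
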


\begin{proof}
We write $\mu_a=-\eps(\pi/2-\theta_a)$, and similarly $\mu_b,\mu_c$. By Lemma~\ref{lem:factor} and the rule $r_V(\lambda)q=q\,r_U(\lambda)$, applied to $h_{CA}$ (which carries $C$ to $A$) and to $h_{BC}$ (which carries $B$ to $C$), we have
\[
q_Zq_Yq_X=h_{AB}r_A(\mu_c)h_{CA}r_C(\mu_b)h_{BC}r_B(\mu_a)
=H\,r_B(\mu_a+\mu_b+\mu_c), \text{ where } H=h_{AB}h_{CA}h_{BC}.
\]
We now compute $H$. Using $\ip{C\times A}{B\times C}=pq-r$ and $(C\times A)\times(B\times C)=-\delta C$, we find
\[
h_{CA}h_{BC}=\frac{\bigl(\alpha,V\bigr)}{4\cos\frac a2\cos\frac b2},\ \ V=(1+q)\,B\times C+(1+p)\,C\times A-\delta C .
\]
Multiplying by $h_{AB}=(1+r,A\times B)/(2\cos\frac c2)$, and using $\ip{A\times B}{B\times C}=rp-q$, $\ip{A\times B}{C\times A}=qr-p$, $\ip{A\times B}{C}=\delta$ and \eqref{eq:gram}, we see that the real part of $8g\,H$ is
\[
(1+r)\alpha-(1+q)(rp-q)-(1+p)(qr-p)+\delta^2=2\alpha .
\]
Applied from right to left, the three factors of $H$ take $B$ to $C$, then to $A$, and finally back to $B$. Thus $H$ fixes $B$, and its vector part $H_{\mathrm{vec}}$ is a multiple of $B$. The identities
\[
\ip{V}{B}=\delta,\ \ \ip{(A\times B)\times V}{B}=\ip{A\times B}{V\times B}=\delta(1-r)
\]
give $\ip{8g\,H_{\mathrm{vec}}}{B}=(1+r)\delta+\delta(1-r)=2\delta$. We conclude that $H=(\alpha,\delta B)/(4g)$, which equals $r_B(\Omega/2)$ by Lemma~\ref{lem:area}. Since $\mu_a+\mu_b+\mu_c=-\eps(3\pi/2-\Theta)$ and $\Omega/2+\eps\Theta=\eps\Phi$, we deduce that
\[
q_Zq_Yq_X=r_B\Bigl(\frac\Omega2-\eps\frac{3\pi}{2}+\eps\Theta\Bigr)=r_B\Bigl(\eps\bigl(\Phi-\tfrac{3\pi}{2}\bigr)\Bigr)=(-\sin\Phi,\eps\cos\Phi\,B).
\]
The rotation angle is $2\eps(\Phi-3\pi/2)\equiv\eps(2\Phi-\pi)$ modulo $2\pi$, and it vanishes modulo $2\pi$ if and only if $\cos\Phi=0$.
\end{proof}

We now deduce the criterion of Theorem~\ref{thm:angle} directly from this rotation formula. Write $q_Zq_Yq_X=(u,wB)$, where $u=-\sin\Phi$ and $w=\eps\cos\Phi$. Conjugating first by $q_X$ and then by $q_Y$ gives $q_Xq_Zq_Y=(u,wC)$ and $q_Yq_Xq_Z=(u,wA)$. We take real parts in the identities $q_Yq_X=q_Z^{-1}(q_Zq_Yq_X)$, $q_Zq_Y=(q_Zq_Yq_X)q_X^{-1}$ and $q_Xq_Z=(q_Xq_Zq_Y)q_Y^{-1}$. Since $\ip{Z}{B}=z\cot\beta$, $\ip{X}{B}=x\cot\beta$ and $\ip{Y}{C}=y\cot\beta$, this gives
\begin{align*}
\cos^2\beta-\sin^2\beta\,\ip{X}{Y}&=u\cos\beta-\eps w\sin\beta\cot\beta\,z,\\
\cos^2\beta-\sin^2\beta\,\ip{Y}{Z}&=u\cos\beta-\eps w\sin\beta\cot\beta\,x,\\
\cos^2\beta-\sin^2\beta\,\ip{Z}{X}&=u\cos\beta-\eps w\sin\beta\cot\beta\,y.
\end{align*}
For a non-equilateral $ABC$, the coordinates $x,y,z$ are not all equal, so that the three inner products are equal if and only if $w=0$, i.e.~$\cos\Phi=0$. In this case $u=-\sigma$, and their common value is $\rho=(\cos^2\beta+\sigma\cos\beta)/\sin^2\beta=\sigma\cos\beta/(1-\sigma\cos\beta)$. If $\sigma=1$, the bound $\rho\le1$ gives $\cos\beta\le1/2$, and equality would force $X=Y=Z$, hence $x=y=z$ by Lemma~\ref{lem:construction}, which is excluded; thus $0<\rho<1$. If $\sigma=-1$, then $-1/2<\rho<0$. In either case the Gram eigenvalues $1-\rho,1-\rho,1+2\rho$ are positive, which proves nondegeneracy and completes the second proof.

\section*{Acknowledgments}
The author thanks Claude (Anthropic) and GPT Sol 6.1 (OpenAI) for their assistance during the preparation of this paper and checking calculations.

\section*{Declarations}
\textbf{Conflict of interest.} The author declares that there is no conflict of interest.\\
\textbf{Data availability.} No external datasets were used. The examples can be reproduced from the formulas in the paper.

\begin{flushright}
	Quang Hung Tran\\
	\small High School for Gifted Students, Vietnam National University, Hanoi, Vietnam\\
	\small Email: tranquanghung@hus.edu.vn
\end{flushright}

\end{document}